\DeclarePairedDelimiter\abs{\lvert}{\rvert}
\DeclarePairedDelimiter\norm{\lVert}{\rVert}
\theoremstyle{definition}
\newtheorem{theorem}{Theorem}[section]
\newtheorem{lemma}[theorem]{Lemma}
\newtheorem{proposition}[theorem]{Proposition}
\newtheorem{remark}[theorem]{Remark}
\numberwithin{equation}{section}
\begin{document}
	\title[OPTIMAL REGULARITY BY AN EPIPERIMETRIC INEQUALITY]{OPTIMAL REGULARITY OF THE THIN OBSTACLE PROBLEM BY AN EPIPERIMETRIC INEQUALITY}
	\author{MATTEO CARDUCCI}
	\address{Department of Mathematics "Guido Castelnuovo", University of Rome "La Sapienza", Piazzale Aldo Moro 5, 00185, Roma.}
	\email{\href{mailto:carducci.1868932@studenti.uniroma1.it}{carducci.1868932@studenti.uniroma1.it}}
	\begin{abstract}
		The key point to prove the optimal $C^{1,\frac12}$ regularity of the thin obstacle problem is that the frequency at a point of the free boundary $x_0\in\Gamma(u)$, say $N^{x_0}(0^+,u)$, satisfies the lower bound $N^{x_0}(0^+,u)\ge\frac32$.
		
		In this paper we show an alternative method to prove this estimate, using an epiperimetric inequality for negative energies $W_\frac32$. It allows to say that there are not $\lambda-$homogeneous global solutions with $\lambda\in (1,\frac32)$, and
		by this this frequancy gap, we obtain the desired lower bound, thus a new self contained proof of the optimal regularity.
	\end{abstract}
	\maketitle
	%\tableofcontents
	\section{Introduction}
	\subsection{The thin obstacle problem.} Let $B_1=\{ x\in \mathbb{R}^{n+1}: \abs x <1\}\subset \mathbb{R}^{n+1}$ and $B'_1:=B_1\cap \{x_{n+1}=0\}$, we consider a solution of the thin obstacle problem, with obstacle $0$, i.e. a function $u\in H^1(B_1)$ which is a minimizer of the Dirichlet energy $$\mathcal{E}(v)=\int_{B_1} \lvert\nabla v\rvert ^2\,dx$$ among the admissible functions $$\mathcal{K}_{g}=\{v\in H^1(B_1):  v\ge0 \mbox{ on } B'_1,\ v=g \mbox{ on } \partial B_1,\ v(x',x_{n+1})=v(x',-x_{n+1}) \},$$ where $g=u|_{\partial B_1}$ in trace sense and $x'=(x_1,\ldots,x_{n})$ denotes the first $n$ coordinates.
	
	We denote with $\Lambda(u)=\{u=0\}\cap B'_1$ the contact set and $\Gamma(u)=\partial'\Lambda(u)$ the free boundary, where $\partial'$  is the boundary in $B'_1$ topology.
	
	The solution $u$ satisfies the corresponding Euler-Lagrange equations \begin{equation*} 
		\begin{cases}
			\Delta u=2\partial_{n+1}u\mathcal{H}^n|_{\Lambda(u)}\le0& \mbox{in } B_1\\
			u\ge0 \quad \partial_{n+1}u\le0 & \mbox{on } B'_1\\
			u\partial_{n+1}u=0 & \mbox{on } B'_1,
		\end{cases}
	\end{equation*}
	in particular $\Delta u=0$ in $B_1\setminus \Lambda(u)$. We recall the third line of the Euler-Lagrange equations as Signorini ambiguous conditions.
	\subsection{State of art} The existence and uniqueness of the solution was established in \cite{ls67} by Lions and Stampacchia, while, Caffarelli proved the $C^{1,\alpha}$ regularity in the either side of the obstacle, in \cite{caf79}. The optimal regularity, which is $C^{1,\frac12}$ in the either side of the obstacle, was shown after 25 years, in \cite{ac04} by Athanasopoulos and Caffarelli. 
	
	In \cite{acs08}, Athanasopoulos, Caffarelli and Salsa started the study of the free boundary, using the monotonicity formula of Almgren's frequency function $$N^{x_0}(r,u):=\frac{r\int_{B_r(x_0)} \lvert \nabla u \rvert^2\,dx}{\int_{\partial B_r(x_0)} u ^2\,d\mathcal{H}^n},$$ for $x_0\in \Gamma(u)$, originally introduced in \cite{alm00}.
	
	This formula allows to define the frequency of a point $x_0\in\Gamma(u)$ as $$N^{x_0}(0^+,u):=\lim_{r\to0^+}N^{x_0}(r,u)$$
	and to decompose the free boundary $\Gamma(u)$ as
	$$\Gamma_\lambda(u):=\{x_0\in \Gamma(u): N^{x_0}(0^+,u)=\lambda\}.$$ 
	It is well known (see \cite{acs08}) that \begin{equation*}\Gamma(u)=\Gamma_\frac32(u)\cup\bigcup_{\lambda\ge2}\Gamma_\lambda(u),
	\end{equation*} 
	but in this paper we will give a new proof to this fact, using an epiperimetric inequality (actually only $\Gamma(u)=\bigcup_{\lambda\ge\frac32}\Gamma_\lambda(u)$). 
	
	An approach to see the regularity and structure of the free boundary $\Gamma(u)$ uses the epiperimetric inequalities. Introduced by \cite{wei99} for the classical obstacle problem, the Weiss' energy of a function $u\in H^1(B_1)$ was generalized to the thin obstacle problem as \begin{equation*}\label{weiss}W_\lambda(r,u):=\frac{1}{r^{n-1+2\lambda}}\int_{B_r}\lvert\nabla u\rvert^2\,dx-\frac{\lambda}{r^{n+2\lambda}}\int_{\partial B_r}u^2\,d\mathcal{H}^n,
	\end{equation*} and we denote with $W_\lambda(u)=W_\lambda(1,u)$.
	
	Focardi and Spadaro in \cite{fs16} proved, by contradiction, an epiperimetric inequality for $\lambda=\frac32$ for positive energies (see Note \ref{posneg}). It say that: if $z\in\mathcal{K}_c$ is $\frac32-$homogeneous, then there is $ \zeta\in\mathcal{K}_c$ such that $$W_{\frac{3}{2}}(\zeta)\le (1-\kappa)W_{\frac{3}{2}}(z).$$ 
	
	A direct proof of the epiperimetric inequality for $W_{\frac32}$ was proved by Colombo, Spolaor and Velichkov in \cite{csv17}, where it was established the best possible constant $\kappa=\frac{1}{2n+5}$. Moreover, in \cite{csv17}, was proved 2 epiperimetric inequalities for $\lambda=2m$: a log-epiperimetric inequality for positive energies $W_{2m}$, and an epiperimetric inequality for negative energies $W_{2m}$ (see Note \ref{posneg}). They say that: if $z\in \mathcal{K}_c$ is $2m-$homogeneous, then there is $\zeta\in\mathcal{K}_c$ such that $$W_{2m}(\zeta)\le W_{2m}(z)(1-\varepsilon\lvert W_{2m}(z)\rvert^\gamma),$$ and there is $\zeta\in\mathcal{K}_c$ such that $$W_{{2m}}(\zeta)\le (1+\varepsilon)W_{2m}(z).$$
	
	Another epiperimetric inequality, both positive and negative energies, for the classical obstacle problem was proved in \cite{esv23} by Edelen, Spolaor and Velichkov.
	
	These epiperimetric inequalities for the thin obstacle problem, allow us to prove the previous results of regularity and structure of the free boundary, established in \cite{acs08} and \cite{gp09}, i.e. $\Gamma_\frac32(u)$ is a locally manifold $C^{1,\alpha}$ of dimension $n-1$ and $\bigcup_{m\in\mathbb{N}}\Gamma_{2m}(u)$ is contained in a countable union of manifolds $C^{1}$ (improved to $C^{1,\log}$ using the log-epiperimetric inequality).
	
	Another question about the thin obstacle problem is the admissible values of the frequency.
	
	We will say $\lambda\ge0$ is an admissible frequency if there is a non-trivial solution $u\in H^1(B_1)$, which is $\lambda-$homogeneous. In particular, if there is $u\in H^1(B_1)$ solution and $\Gamma_\lambda(u)\not=\emptyset$, then $\lambda$ is an admissible frequency, by Proposition \ref{blow}.
	
	One way to prove some estimates for the admissible frequencies is uses the epiperimetric inequalities. In fact, they allows to show a frequency gap for the $\lambda-$homogeneous solutions: forward using the epiperimetric inequalities for positive energies, and backward using the epiperimetric inequalities for negative energies (see Note \ref{posneg}), as shown in Proposition \ref{gap}.
	
	For instance, the constant $\kappa=\frac{1}{2n+5}$ for $W_\frac32$ is the best possible constant, since allows to prove a frequency gap in $(\frac32,2)$ (notice that there is a solution 2-homogeneous). On the other hand, the epiperimetric inequalities for $W_{2m}$ allows to prove that $\Gamma_\lambda(u)=\emptyset$ for $\lambda\in (2m-c_m^-,2m+c_m^+)$, for some $c_m^+$ and $c_m^-$ small explicit constants.
	\subsection{Main results.} 
%	Using the Almgren's frequency function, the original proof in \cite{ac04} of optimal regularity $C^{1,\frac12}$ simplifies. 
	We give an alternative proof to the optimal $C^{1,\frac12}$ regularity of the solution of the thin obstacle problem, which was first obtained in the paper of Athanasopoulos and Caffarelli \cite{ac04}. 
	
	The key point is to prove the lower bound \begin{equation}\label{f}N^{x_0}(0^+,u)\ge\frac32 \quad \forall x_0\in\Gamma(u),
	\end{equation}
	from which the optimal regularity follows by a standard argument in the free boundary problems.
	
	In the literature, there are several proofs of the estimate \eqref{f}. For instance, we can use the Alt-Caffarelli-Friedman's monotonicity formula (see \cite{psu12}), or the semiconvexity of the solution (see \cite{survey}), or the Federer's dimension reduction principle (see \cite{fs}). 
	
	For the proof of the lower bound \eqref{f}, we prove a new epiperimetric inequality for negative energies\footnote{The word "negative energies" (respectively "positive energies") emphatizes that is relevant only for $W_\frac32(z)<0$ (respectively $W_\frac32(z)>0$), since in the other case we can choose $\zeta\equiv z$.\label{posneg}} $W_\frac32$, that is the following theorem.
	\begin{theorem}\label{thm1} Let $z=r^\frac32c(\theta)\in\mathcal{K}_c$, the $\frac32-$homogeneous extension in $\mathbb{R}^{n+1}$ of $c\in H^1(\partial B_1)$, then there is $\zeta\in\mathcal{K}_c$ such that $$W_{\frac{3}{2}}(\zeta)\le (1+\varepsilon)W_{\frac{3}{2}}(z),$$ with $\varepsilon=\frac{1}{2n+3},$ dimensional constant.
	\end{theorem}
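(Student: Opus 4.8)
The inequality is only meaningful when $W_{\frac32}(z)<0$, since otherwise $\zeta\equiv z$ already works; so assume $W_{\frac32}(z)<0$ from now on. The point is that the $\frac32$-homogeneous extension $z$ is an inefficient competitor as soon as the trace $c$ carries mass in sub-critical modes, and the plan is to lower the radial homogeneity on that part of $c$ and quantify the resulting decrease of $W_{\frac32}$. First I would pass to the sphere: for a $\frac32$-homogeneous function a direct computation gives
\[
W_{\frac32}(z)=\frac{1}{n+2}\Big(\int_{\partial B_1}\lvert\nabla_\theta c\rvert^2\,d\mathcal H^n-\mu\int_{\partial B_1}c^2\,d\mathcal H^n\Big),\qquad \mu:=\tfrac32\big(n+\tfrac12\big),
\]
so $W_{\frac32}(z)<0$ is exactly the statement that the Rayleigh quotient $R(c):=\int_{\partial B_1}\lvert\nabla_\theta c\rvert^2\big/\int_{\partial B_1}c^2$ is below $\mu$ ($=\mu_{\frac32}$, the spherical eigenvalue $\lambda(\lambda+n-1)$ attached to $\lambda=\frac32$).

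The separable competitor already handles a large range. Let $\lambda=\lambda(R(c))\ge0$ be the positive root of $\lambda(\lambda+n-1)=R(c)$ and put $\zeta:=r^{\lambda}c(\theta)$. Since $z\in\mathcal K_c$ forces $c\ge0$ on the equator, $\zeta\ge0$ on $B'_1$, hence $\zeta\in\mathcal K_c$. A one-line integration using $R(c)=\lambda(\lambda+n-1)$ gives $W_{\frac32}(\zeta)=\big(\lambda-\tfrac32\big)\int_{\partial B_1}c^2$, while the identity $\lambda(\lambda+n-1)-\mu=\big(\lambda-\tfrac32\big)\big(\lambda+n+\tfrac12\big)$ turns the displayed formula into $W_{\frac32}(z)=\frac{1}{n+2}\big(\lambda-\tfrac32\big)\big(\lambda+n+\tfrac12\big)\int_{\partial B_1}c^2$. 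Therefore
\[
\frac{W_{\frac32}(\zeta)}{W_{\frac32}(z)}=\frac{n+2}{\lambda+n+\tfrac12}\ \ge\ 1+\frac{1}{2n+3}\qquad\Longleftrightarrow\qquad \lambda\le1\ \ (\text{i.e. }R(c)\le n),
\]
and since $W_{\frac32}(z)<0$ this reads $W_{\frac32}(\zeta)\le(1+\varepsilon)W_{\frac32}(z)$ with $\varepsilon=\tfrac1{2n+3}$. This also exhibits the constant as sharp, with homogeneity $\lambda=1$ as the critical case.

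For the remaining range $R(c)\in(n,\mu)$ — where the separable competitor would have $\lambda\in(1,\tfrac32)$ and is no longer enough — one has to decouple the sub-critical and super-critical modes of $c$. I would write $c=\bar c+c^\perp$, with $\bar c$ the $L^2(\partial B_1)$-projection of $c$ onto the span of the eigenmodes of the constrained spherical problem of eigenvalue $\le n$ (homogeneities $\le1$), and set $\zeta:=T\bar c+r^{\frac32}c^\perp$, where $T$ re-extends each eigencomponent of $\bar c$ with its own homogeneity. Orthogonality of distinct eigenmodes annihilates all mixed terms in $W_{\frac32}$, so
\[
W_{\frac32}(z)=W_{\frac32}(r^{\frac32}\bar c)+W_{\frac32}(r^{\frac32}c^\perp),\qquad W_{\frac32}(\zeta)=W_{\frac32}(T\bar c)+W_{\frac32}(r^{\frac32}c^\perp).
\]
Applying the previous paragraph componentwise (each component of $\bar c$ has homogeneity $\le1$) gives $W_{\frac32}(T\bar c)\le(1+\varepsilon)W_{\frac32}(r^{\frac32}\bar c)$, while $W_{\frac32}(r^{\frac32}c^\perp)=\frac1{n+2}\big(R(c^\perp)-\mu\big)\int_{\partial B_1}(c^\perp)^2\ge0$ because $c^\perp$ is orthogonal to all eigenmodes of eigenvalue $\le n$ and — this is the spectral input I need — the constrained spherical problem has no eigenvalue in $(n,\mu)$. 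Summing and using $\varepsilon>0$ to absorb the non-negative super-critical term gives $W_{\frac32}(\zeta)\le(1+\varepsilon)W_{\frac32}(z)$.

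Two points carry the real content. First, the spherical frequency gap: I would show directly that the Dirichlet eigenvalue problem on $\partial B_1$ among functions even in $x_{n+1}$ and non-negative on the equator has no eigenvalue strictly between $n$ and $\mu=\tfrac32(n+\tfrac12)$ — equivalently, no homogeneous spherical Signorini solution of homogeneity in $(1,\tfrac32)$ — by classifying such solutions, taking care not to use the very frequency gap that Proposition \ref{gap} later deduces from this theorem. Second, and this is where I expect the genuine difficulty, admissibility: $\zeta$ must belong to $\mathcal K_c$, i.e. be $\ge0$ on $B'_1$; since $z\ge0$ there it suffices that the correction $T\bar c-r^{\frac32}\bar c$ not push $\zeta$ below $0$ on $B'_1$, but the re-extended low part need not itself be non-negative there. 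One therefore has to either choose the re-extension $T$ so that the sign on the equator is preserved — the constant and the homogeneity-$\tfrac12$ components are harmless, the homogeneity-$1$ component is the delicate one — or replace $\zeta$ by an appropriate truncation on $B'_1$ and check that truncation does not increase $W_{\frac32}$. Once these are in place the value $\varepsilon=\tfrac1{2n+3}$ is forced, being exactly $\tfrac{3/2-\lambda}{\lambda+n+1/2}$ at $\lambda=1$.
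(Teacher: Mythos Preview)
Your separable competitor in the first step is fine, but the second step has a genuine gap, and it is precisely the admissibility issue you flag without resolving. If the ``constrained spherical problem'' means ordinary even spherical harmonics, then $\bar c$ is a constant plus a linear function $\ell(x')$, and on $B'_1$ your competitor differs from $z$ by the sign-indefinite quantity $a(1-r^{3/2})+\ell(x')(r-r^{3/2})$; for instance, if $\int_{\partial B_1}c<0$ (which nothing forbids) then $\zeta<z$ on all of $B'_1$ near the origin, and $\zeta$ need not be nonnegative there. Truncating at $0$ on $B'_1$ does not obviously help, since the truncation can increase the Dirichlet part of $W_{\frac32}$. If instead you mean genuine Signorini eigensolutions on the sphere, these do not form a linear space, so there is no $L^2$-orthogonal projection onto them, the splitting $W_{\frac32}(z)=W_{\frac32}(r^{3/2}\bar c)+W_{\frac32}(r^{3/2}c^\perp)$ is unjustified, and the ``spectral gap in $(n,\mu)$'' you invoke is the frequency gap of Proposition~\ref{gap} itself, which would make the argument circular.

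The paper avoids all of this by choosing a decomposition tailored to the obstacle rather than to the Laplacian. It writes $c=Ch_e+c_0u_0+\phi$ with $u_0(\theta)=\lvert\theta_{n+1}\rvert$ and $\phi\perp\{\text{constants, linears}\}$: the linear part of $c$ is absorbed into a multiple of the model solution $h_e$ (kept at homogeneity $\tfrac32$), and only $u_0$ has its homogeneity lowered to $1$. Since $u_0\equiv0$ on $B'_1$, the competitor $\zeta=Cr^{3/2}h_e+c_0ru_0+r^{3/2}\phi$ coincides with $z$ on $B'_1$, so $\zeta\in\mathcal K_c$ is automatic. The price is that $h_e$ and $u_0$ are not Laplace--Beltrami eigenfunctions, so cross terms do not vanish by orthogonality; the paper computes $W_{\frac32}(\zeta)-(1+\varepsilon)W_{\frac32}(z)=I+J+K+L$ and checks directly that $I=K=0$ when $\varepsilon=\tfrac1{2n+3}$, $J\le0$ since $\phi$ lives in modes $\ge\lambda(2)$, and $L\le0$ since $\phi=c\ge0$ on $B'_1\cap\{x'\cdot e<0\}$. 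The only spectral input is the (unconstrained) fact $\lambda_k\ge\lambda(2)$ for $k\ge n+3$, so no circularity arises.
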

	
	Using this epiperimetric inequality, we can deduce a backward frequency gap of $\frac32$ until $1$ and the estimate \eqref{f}, as in the next proposition. Moreover, the frequency gap shows that $\varepsilon =\frac{1}{2n+3}$ is the best possible constant, since there is a solution $1-$homogeneous, that is $-\lvert x_{n+1}\rvert$.
	\begin{proposition}\label{gap}
		Let $z\in H^1_{loc}(\mathbb{R}^{n+1})$ a $\lambda-$homogeneous global solution of the thin obstacle problem, then $\lambda\not\in (1,\frac32)$.
		
		In particular if $u\in H^1(B_1)$ is a solution, then $$N^{x_0}(0^+,u)\ge\frac32 \quad \forall x_0\in\Gamma(u),$$ i.e. \eqref{f} holds.
	\end{proposition}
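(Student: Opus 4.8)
The plan is to derive Proposition \ref{gap} from the epiperimetric inequality of Theorem \ref{thm1} in two stages: first the statement about homogeneous global solutions, then the pointwise lower bound on the frequency via a blow-up argument. For the first part, suppose toward a contradiction that $z$ is a nontrivial $\lambda$-homogeneous global solution with $\lambda\in(1,\tfrac32)$. The key observation is that for a $\lambda$-homogeneous function the Weiss energy $W_{3/2}$ is sign-definite: using the homogeneity one computes $W_{3/2}(z)=(\lambda-\tfrac32)\int_{\partial B_1}z^2\,d\mathcal H^n$ (up to the usual normalization), so $\lambda<\tfrac32$ forces $W_{3/2}(z)<0$. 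On the other hand, for a $\lambda$-homogeneous \emph{solution} the map $r\mapsto W_{3/2}(r,z)$ is monotone (this is the standard Weiss-type monotonicity: its derivative is a nonnegative multiple of $\int_{\partial B_r}(\partial_\nu z-\tfrac{3}{2r}z)^2$, which vanishes identically precisely when $z$ is $\tfrac32$-homogeneous), and being scale-invariant it is in fact constant in $r$. This rigidity is what we play against the epiperimetric inequality.

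Now feed $z$ into Theorem \ref{thm1}: there is a competitor $\zeta\in\mathcal K_c$ with $W_{3/2}(\zeta)\le(1+\varepsilon)W_{3/2}(z)$, and since $W_{3/2}(z)<0$ this says $W_{3/2}(\zeta)\le(1+\varepsilon)W_{3/2}(z)<W_{3/2}(z)$. The competitor $\zeta$ agrees with $z$ on $\partial B_1$ and lies in $\mathcal K_c$, so it is an admissible boundary-data competitor for the Weiss energy on $B_1$. The contradiction comes from comparing this with the energy of the solution that has $z|_{\partial B_1}$ as its own boundary datum: by minimality combined with the constancy/monotonicity of $W_{3/2}(r,\cdot)$ along the (homogeneous) solution $z$, no admissible $\zeta$ can have strictly smaller Weiss energy than $z$ itself — i.e. $z$ is already a minimizer of $W_{3/2}$ in its class — which directly contradicts $W_{3/2}(\zeta)<W_{3/2}(z)$. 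Hence no such $\lambda\in(1,\tfrac32)$ can occur, proving $\lambda\notin(1,\tfrac32)$.

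For the second part, let $u\in H^1(B_1)$ be a solution and $x_0\in\Gamma(u)$. By Almgren monotonicity the frequency $\lambda:=N^{x_0}(0^+,u)$ exists and satisfies $\lambda\ge1$ (the elementary lower bound, e.g. because $u\in C^{1,\alpha}$ and $\nabla u(x_0)=0$ on the free boundary, so $u$ vanishes to order at least one). The standard blow-up procedure — rescalings $u_{x_0,r}(x)=u(x_0+rx)/\big(r^{-n}\int_{\partial B_r(x_0)}u^2\big)^{1/2}$ — produces, along a subsequence, a nontrivial blow-up limit $u_0$ that is a $\lambda$-homogeneous global solution of the thin obstacle problem; this is exactly the content of Proposition \ref{blow} referred to in the excerpt. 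Applying the first part to $u_0$ gives $\lambda\notin(1,\tfrac32)$, and combined with $\lambda\ge1$ this yields $\lambda\ge\tfrac32$, which is \eqref{f}.

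The main obstacle is making the contradiction in the first part airtight: one must verify carefully that the epiperimetric competitor $\zeta$ is genuinely admissible for the comparison (it is built from $z|_{\partial B_1}\in\mathcal K_c$, so this is where the hypothesis $z\in\mathcal K_c$ is used), and that the minimality of $z$ for $W_{3/2}$ in its boundary class really follows from the Weiss monotonicity plus the minimality of $u$ — equivalently, that $W_{3/2}(1,\cdot)$ evaluated at the $3/2$-homogeneous rescaling of any admissible competitor is $\ge W_{3/2}(r,\text{competitor})$ for small $r$, which is the usual dichotomy argument underlying epiperimetric inequalities. The other routine points — the explicit formula $W_{3/2}(z)=(\lambda-\tfrac32)\int_{\partial B_1}z^2$ for homogeneous $z$, the nonnegativity of $\frac{d}{dr}W_{3/2}(r,z)$ for solutions, and the compactness in the blow-up — are standard and can be cited from \cite{acs08}, \cite{csv17}.
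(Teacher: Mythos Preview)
Your argument contains a genuine gap in the first part. You write ``feed $z$ into Theorem \ref{thm1}: there is a competitor $\zeta\in\mathcal K_c$ with $W_{3/2}(\zeta)\le(1+\varepsilon)W_{3/2}(z)$,'' but Theorem \ref{thm1} does \emph{not} apply to $z$: its hypothesis is that the input is the $\tfrac32$-homogeneous extension $r^{3/2}c(\theta)$ of a trace $c$, whereas your $z=r^\lambda c(\theta)$ is $\lambda$-homogeneous with $\lambda\neq\tfrac32$. What the theorem actually yields is $W_{3/2}(\zeta)\le(1+\varepsilon)W_{3/2}(r^{3/2}c)$, and $W_{3/2}(r^{3/2}c)\neq W_{3/2}(z)$. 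This is not a cosmetic slip: your argument never uses the assumption $\lambda>1$, so if it worked it would equally exclude $\lambda=1$ --- but $-\lvert x_{n+1}\rvert$ is a $1$-homogeneous solution, so the conclusion would be false.

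The paper's proof fixes exactly this point. One applies Theorem \ref{thm1} to $r^{3/2}c$, then uses the identity \eqref{quarta} (namely $W_{3/2}(r^{3/2}c)=\bigl(1+\tfrac{t}{n+2}\bigr)W_{3/2}(z)$ with $t=\lambda-\tfrac32$) to rewrite the right-hand side in terms of $W_{3/2}(z)$. Combining this with the minimality inequality $W_{3/2}(z)\le W_{3/2}(\zeta)$ (which, incidentally, follows immediately from Dirichlet minimality since the boundary term is fixed --- no Weiss monotonicity needed) gives
\[
1\ \ge\ (1+\varepsilon)\Bigl(1+\tfrac{t}{n+2}\Bigr),
\]
and with $\varepsilon=\tfrac{1}{2n+3}$ this is exactly $t\le-\tfrac12$, i.e.\ $\lambda\le 1$. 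The quantitative relation between $\varepsilon$ and the size of the gap is the whole content here, and it is precisely what your version skips.

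Your second part (blow-up plus Proposition \ref{blow}) matches the paper's argument; note only that the paper invokes the strict bound $\lambda\ge 1+\alpha>1$ coming from $C^{1,\alpha}$ regularity, so that $\lambda\notin(1,\tfrac32)$ forces $\lambda\ge\tfrac32$ rather than merely $\lambda\ge 1$.
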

	By the first statement of Proposition \ref{gap}, we deduce the estimate \eqref{f}. In fact it is well known that if $x_0\in\Gamma(u)$, then $\lambda=N^{x_0}(0^+,u)>1$ and the blow-up $u_0$ is $\lambda-$homogeneous (see Proposition \ref{blow}). Therefore, since there are not $\lambda-$homogeneous solutions with $\lambda \in(1,\frac32)$, we deduce $\lambda\ge\frac32$, which is \eqref{f}.
	
	For the sake of completeness, in Theorem \ref{c112} we use the lower bound \eqref{f} to conclude a $C^{1,\frac12}$ estimate, as desired.
	
	Finally, we give an alternative proof of the characterization of the $\frac32-$homogeneous solutions.
	
	\begin{proposition}\label{uniq} Let $z\in H^1_{loc}(\mathbb{R}^{n+1})$ a $\frac32-$homogeneous global solution of the thin obstacle problem, then $z=Ch_e(x',x_{n+1}),$ for some $C\ge0$, where 
	\begin{equation}\label{he}h_e(x',x_{n+1})=\mbox{Re}((x'\cdot e+i\lvert x_{n+1}\rvert)^\frac{3}{2}),
	\end{equation}
	with $e\in\partial B'_1$.
	\end{proposition}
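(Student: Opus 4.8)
The plan is to argue by induction on the dimension $n$ of the thin space, organising the cases according to the shape of the contact set $\Lambda(z)\subset\{x_{n+1}=0\}\cong\mathbb R^n$, which is a closed cone since $z$ is $\frac32$-homogeneous (so $z(x)=0\iff z(\lambda x)=0$ for $\lambda>0$).

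First I would dispose of the three ``degenerate'' shapes of $\Lambda(z)$. If $\Lambda(z)\in\{\emptyset,\{0\}\}$, then $\Delta z=0$ in all of $\mathbb R^{n+1}$ (a point carries no $\mathcal H^n$ mass), so $c:=z|_{\partial B_1}$ is a spherical harmonic of $S^n$ with eigenvalue $\tfrac32\bigl(\tfrac32+n-1\bigr)=\tfrac{3(2n+1)}4$; since the eigenvalues of $-\Delta_{S^n}$ are $k(k+n-1)$, $k\in\mathbb N_0$, and $\tfrac{3(2n+1)}4$ lies strictly between the values for $k=1$ and $k=2$ and equals neither, we get $c\equiv0$, hence $z\equiv 0$. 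If $\Lambda(z)=\{x_{n+1}=0\}$, then $z$ restricted to $\{x_{n+1}>0\}$ is harmonic, $\frac32$-homogeneous and vanishes on the bounding hyperplane, so $c|_{S^n_+}$ is a Dirichlet eigenfunction of $S^n_+$; by odd reflection these eigenvalues are again $\{k(k+n-1):k\ge1\}$, so once more $z\equiv0$. In each of these three cases $z=0=0\cdot h_e$, and this also settles the base dimension $n=0$.

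Now assume $\Lambda(z)$ is none of $\emptyset,\{0\},\mathbb R^n$. When $n=1$ this forces $\Lambda(z)$ to be a closed half-line, so after a rotation $z$ is harmonic in the slit plane $\mathbb R^2\setminus\{x_1\le0,\,x_2=0\}$, even in $x_2$, vanishes on the slit and is positive on the complementary ray; writing $z=\rho^{3/2}\bigl(a\cos(\tfrac32\theta)+b\sin(\tfrac32\theta)\bigr)$ in polar coordinates adapted to the slit, evenness kills $b$, positivity on $\{\theta=0\}$ gives $a\ge0$, and $\partial_{x_2}z=-\tfrac32 a\rho^{1/2}\le0$ on the slit, so $z=a\,h_{e_1}$ — this is the base case $n=1$. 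For $n\ge2$, connectedness of $\mathbb R^n\setminus\{0\}$ shows $\Gamma(z)=\partial'\Lambda(z)$ contains a point $x_0\ne0$. I would then study $r\mapsto N^{x_0}(r,z)$: since $z$ is $\frac32$-homogeneous about the origin, $z(x_0+R\,\cdot)/R^{3/2}=z(R^{-1}x_0+\cdot)\to z$ as $R\to\infty$, so the blow-down of $z$ at $x_0$ is $z$ itself and $N^{x_0}(+\infty,z)=\frac32$; on the other hand $N^{x_0}(0^+,z)\ge\frac32$ by Proposition \ref{gap} (since $x_0\in\Gamma(z)$). Almgren monotonicity then squeezes $N^{x_0}(\cdot,z)\equiv\frac32$, and equality in the monotonicity formula forces $z$ to be $\frac32$-homogeneous about $x_0$ as well. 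Differentiating the two Euler identities $\nabla z\cdot y=\tfrac32 z$ and $\nabla z\cdot(y-x_0)=\tfrac32 z$ and subtracting gives $\partial_{x_0}z\equiv0$, i.e. $z$ is invariant along $x_0$. Since $x_0\in\Gamma(z)\subset\{x_{n+1}=0\}$, this is a thin direction, so $z$ descends to a $\frac32$-homogeneous global solution $\tilde z$ on $x_0^{\perp}\cong\mathbb R^{(n-1)+1}$ (still even in $x_{n+1}$, nonnegative and Signorini on $x_0^{\perp}\cap\{x_{n+1}=0\}$); by the inductive hypothesis $\tilde z=C\,h_{e'}$ with $C\ge0$ and $e'\in\partial B_1'\cap x_0^{\perp}$, and since $h_{e'}$ is itself independent of the $x_0$-direction, $z=C\,h_{e'}$ on all of $\mathbb R^{n+1}$.

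The two spectral non-coincidences and the explicit slit-plane computation are routine. The step to treat with care is the implication ``constant Almgren frequency about $x_0$ $\Rightarrow$ $\frac32$-homogeneity about $x_0$'': this is the equality case of the monotonicity formula for $N^{x_0}$ adapted to the thin obstacle problem, and the whole argument hinges on the lower bound $N^{x_0}(0^+,z)\ge\frac32$ from Proposition \ref{gap} (hence ultimately on the epiperimetric inequality of Theorem \ref{thm1}) to close the squeeze. A minor additional point is verifying that a translation-invariant solution genuinely descends to a solution in one dimension less, which follows from the variational characterisation of solutions.
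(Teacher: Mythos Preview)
Your argument is correct, but it follows a genuinely different route from the paper's.

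The paper exploits the \emph{equality case} of the epiperimetric inequality (Theorem~\ref{thm1}) directly: since $z$ is a $\tfrac32$-homogeneous solution one has $W_{3/2}(z)=0$, and minimality gives $0=W_{3/2}(z)\le W_{3/2}(\zeta)\le(1+\varepsilon)W_{3/2}(z)=0$, so the inequality is an equality. By Remark~\ref{0} this forces $\phi\equiv0$ in the spectral decomposition $c=Ch_e+c_0u_0+\phi$, and a short computation using $W_{3/2}(r^{3/2}u_0)<0$ then kills $c_0$. The whole proof is three lines once the decomposition \eqref{decomposition} and Remark~\ref{0} are in place.

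Your approach is the classical Federer-type dimension reduction: dispose of the degenerate contact sets by spectral non-coincidence, settle $n=1$ by an explicit slit-plane computation, and for $n\ge2$ use the squeeze $N^{x_0}(0^+,z)\ge\tfrac32=N^{x_0}(+\infty,z)$ (the lower bound coming from Proposition~\ref{gap}, the upper from the blow-down being $z$ itself) together with the rigidity case of Almgren's monotonicity to produce a direction of translation invariance. Both proofs ultimately lean on Theorem~\ref{thm1}, but yours does so only through the frequency gap, while the paper's reads off the conclusion from the explicit structure of the competitor in the epiperimetric inequality. The paper's route is considerably shorter and stays entirely within the machinery already built; yours is more geometric and portable (it would work in any setting where one has a frequency gap and Almgren monotonicity with rigidity, without needing an explicit competitor construction). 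One cosmetic remark: in your inductive step you do not need to ``differentiate'' the Euler identities---subtracting $(y-x_0)\cdot\nabla z=\tfrac32 z$ from $y\cdot\nabla z=\tfrac32 z$ already yields $x_0\cdot\nabla z\equiv0$.
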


	For alternative proofs of this results see \cite{acs08} or \cite{psu12}.
	
	\subsection*{Acknowledgment.} 
	I would like to thank Bozhidar Velichkov and Luca Spolaor for the invitation to the workshop "Regularity Theory for Free Boundary and Geometric Variational Problems III" in Levico Terme, where I started and did much of the work that led to this paper. 
	
	In particular I thank Bozhidar Velichkov for the useful discussions.

	I would like also thank Emanuele Spadaro for introducing me to the thin obstacle problem, which I was able to study in my master's degree thesis.
	\section{Preliminaries}
	Let's start with a short reminder of some of the most important results that we will use later.
	\subsection{Non-optimal $C^{1,\alpha}$ regularity}
	The purpose of this paper, as well as the prove of a new epiperimetric inequality, is to prove the optimal $C^{1,\frac12}$ regularity. The $C^{1,\alpha}$ regularity for some (non-optimal) $\alpha>0$ can be obtained in several different ways, by some standard elliptic PDE arguments. Precisely, we have the following theorem. 
	
	\begin{theorem} \label{c11a}($C^{1,\alpha}$ estimate)
		Let $u\in H^1(B_1)$ a solution of the thin ostacle problem, then $u\in C^{1,\alpha}_{loc}(B_1^+\cup B'_1)$ with the estimate $$\norm u_{C^{1,\alpha}(B_{\frac12}^+\cup B'_{\frac12})}\le C\norm u_{L^2(B_1)},$$ for some $C>0$.
		\begin{proof}
			For the classical proof see \cite{caf79}. For an alternative approach we refer to \cite{psu12}.
		\end{proof}
	\end{theorem}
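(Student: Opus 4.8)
The plan is to follow the classical route of Caffarelli and Athanasopoulos--Caffarelli: reduce the minimizer, by penalization, to solutions of uniformly elliptic Neumann/Robin problems --- to which the De Giorgi--Nash--Moser and Schauder theories apply --- derive $C^{1,\alpha}$ bounds that are uniform in the penalization parameter, and let it tend to zero; one only aims at some dimensional $\alpha\in(0,1)$, not the optimal $\tfrac12$. By linearity we may assume $\norm u_{L^2(B_1)}=1$, so Caccioppoli's inequality for the minimizer gives $\norm{\nabla u}_{L^2(B_{3/4})}\le C$; by the even symmetry in $x_{n+1}$ it suffices to estimate $u$ on $B_{1/2}^+\cup B_{1/2}'$ (with $B_1^+=B_1\cap\{x_{n+1}>0\}$), the stated local statement then following by scaling, covering and symmetry.

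\emph{Penalization and first estimates.} For $\varepsilon>0$ let $u_\varepsilon$ minimize $\int_{B_1}\abs{\nabla v}^2+2\int_{B_1'}\Phi_\varepsilon(v)$ over the even competitors with boundary datum $u|_{\partial B_1}$, where $\Phi_\varepsilon\ge0$ is a smooth convex penalization of negativity ($\Phi_\varepsilon\equiv0$ on $[0,\infty)$, $\Phi_\varepsilon(t)\uparrow+\infty$ for $t<0$ as $\varepsilon\to0$). Then $u_\varepsilon$ is harmonic in $B_1^+$, is smooth up to $B_1'$ for each fixed $\varepsilon$, satisfies $\partial_{n+1}u_\varepsilon=\tfrac12\Phi_\varepsilon'(u_\varepsilon)\le0$ on $B_1'$, and $u_\varepsilon\to u$ in $H^1$; all estimates below will be uniform in $\varepsilon$. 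Since off $B_1'$ the functions $u_\varepsilon^{\pm}$ are maxima of harmonic functions, and the only singular part of $\Delta u_\varepsilon^{\pm}$ on $B_1'$ is a nonnegative normal-derivative jump (using $u_\varepsilon\ge0$ where $u_\varepsilon^+=u_\varepsilon$, and evenness), the $u_\varepsilon^{\pm}$ are subharmonic, so $\norm{u_\varepsilon}_{L^\infty(B_{3/4})}\le C$. A De Giorgi--Nash--Moser bootstrap for the nonlinear Neumann problem then yields $\norm{\Phi_\varepsilon'(u_\varepsilon)}_{L^\infty(B_{5/8}')}\le C$ and a uniform bound $u_\varepsilon\in C^{0,\alpha}(B_{1/2}^+\cup B_{1/2}')$, $\alpha=\alpha(n)$.

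\emph{From $C^{0,\alpha}$ to $C^{1,\alpha}$.} Differentiating along a tangential direction $e\perp e_{n+1}$, the harmonic function $w=\partial_e u_\varepsilon$ solves the \emph{linear} Robin problem $\partial_{n+1}w=\tfrac12\Phi_\varepsilon''(u_\varepsilon)\,w$ on $B_1'$, whose coefficient $a_\varepsilon:=\tfrac12\Phi_\varepsilon''(u_\varepsilon)\ge0$ is singular as $\varepsilon\to0$ but has a favourable sign. That sign makes $(w-k)^+$ and $(-w-k)^+$ subharmonic in $B_1$ for every $k\ge0$; with $k=0$ the sub-mean-value inequality gives a uniform tangential Lipschitz bound $\abs{\nabla'u_\varepsilon}\le C$ on $B_{1/2}$, and a De Giorgi oscillation-decay argument for this good-sign Robin problem upgrades it to a uniform bound $\norm{\partial_e u_\varepsilon}_{C^{0,\alpha'}(B_{1/4}^+\cup B_{1/4}')}\le C$, $\alpha'=\alpha'(n)$, for all tangential $e$; hence the Dirichlet trace $u_\varepsilon(\cdot,0)$ is uniformly bounded in $C^{1,\alpha'}(B_{1/4}')$. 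The normal derivative is then recovered up to $B_1'$ from above: being harmonic in $B_1^+$, $\partial_{n+1}u_\varepsilon$ has normal trace on $B_1'$ equal, modulo a term smooth on $B_{1/4}'$ coming from the spherical data, to the image of $u_\varepsilon(\cdot,0)$ under the order-one Dirichlet-to-Neumann operator of the half-ball, hence uniformly bounded in $C^{0,\alpha'}(B_{1/4}')$; boundary Schauder estimates for the Dirichlet problem then give a uniform bound $\partial_{n+1}u_\varepsilon\in C^{0,\alpha'}(\overline{B_{1/8}^+})$. Letting $\varepsilon\to0$ yields $u\in C^{1,\alpha'}_{loc}(B_1^+\cup B_1')$, hence also on $B_1^-\cup B_1'$ by even symmetry, with the asserted estimate; the exponent $\alpha'$ is dimensional and is not claimed optimal.

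\emph{Where the difficulty sits.} The only non-routine ingredient is the uniform-in-$\varepsilon$ Hölder estimate for the tangential derivatives $\partial_e u_\varepsilon$: one must exploit the favourable sign $a_\varepsilon\ge0$ rather than the (unbounded) size of the Robin coefficient, and as $\varepsilon\to0$ the boundary condition on $B_1'$ degenerates into a mixed one --- homogeneous Neumann on $\{u>0\}$ and homogeneous Dirichlet on $\Lambda(u)$ --- across the a priori uncontrolled interface $\Gamma(u)=\partial'\Lambda(u)$. (One may instead differentiate the limiting problem directly, using that $\partial_e u=0$ on $\Lambda(u)$ and $\partial_{n+1}u=0$ on $\{u>0\}$ to get the subharmonicity of $(\partial_e u-k)^+$, but the penalized formulation makes the oscillation decay cleaner to carry out.) This very lack of control at $\Gamma(u)$ is why the argument stops short of the optimal exponent $\tfrac12$ --- which is reached in this paper only afterwards, through the frequency gap of Proposition~\ref{gap}, itself a consequence of the epiperimetric inequality in Theorem~\ref{thm1}.
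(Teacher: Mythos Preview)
The paper does not actually prove this theorem: its entire ``proof'' is a pair of citations to \cite{caf79} and \cite{psu12}. Your proposal therefore supplies what the paper only references, and the sketch you give is precisely the classical Caffarelli route contained in those sources --- penalize the constraint, differentiate tangentially to obtain a linear Robin problem with the favourable sign $a_\varepsilon=\tfrac12\Phi_\varepsilon''(u_\varepsilon)\ge0$, use that sign to make $(\partial_e u_\varepsilon-k)^+$ subharmonic for $k\ge0$, run De~Giorgi oscillation decay to get uniform $C^{0,\alpha'}$ control on tangential derivatives, and recover the normal derivative by boundary Schauder. The main line is correct and matches the cited literature.

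Two small remarks. First, the intermediate claim $\norm{\Phi_\varepsilon'(u_\varepsilon)}_{L^\infty(B'_{5/8})}\le C$ is neither well justified nor needed: the argument for the tangential derivatives uses only the sign of $\Phi_\varepsilon''$, not any size bound on $\Phi_\varepsilon'$, and you can go directly from the $L^\infty$ bound on $u_\varepsilon$ to the tangential-derivative step. Second, as you note yourself, one may dispense with penalization entirely and argue on the limiting solution (this is closer to Caffarelli's original presentation): the key subharmonicity of $(\partial_e u-k)^+$ holds directly since on $B_1'$ either $\partial_e u=0$ (on $\Lambda(u)$) or $\partial_{n+1}\partial_e u=0$ (on $\{u>0\}\cap B_1'$). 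Either variant is acceptable, and both lead to the same non-optimal dimensional exponent; the optimal $\tfrac12$ is obtained later in the paper via Proposition~\ref{gap}, exactly as you indicate.
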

	\subsection{Monotonicity formulas and blow-up}
	
	The following proposition claim the monotonicity formula of Almgren's frequency function.
	\begin{proposition}\label{a}
		Let $u\in H^1(B_1)$ a solution, $x_0\in\Gamma(u)$ and \begin{equation*}N^{x_0}(r,u):=\frac{r\int_{B_r(x_0)} \lvert \nabla u \vert^2\,dx}{\int_{\partial B_r(x_0)} u ^2\,d\mathcal{H}^n}
		\end{equation*} the Almgren's frequency function (dropping the dependence on $x_0$ if $x_0=0$), then $$r\mapsto N^{x_0}(r,u)$$ is monotone increasing for $ r\in(0,1-\lvert x_0\rvert).$
		
		Moreover $N(r,u)\equiv \lambda$ for all $r\in(0,1)$ if and only if $u$ $\lambda-$homogenous.
		\begin{proof} 
		For the proof we refer to \cite{acs08} or \cite{psu12}.
		\end{proof}
	\end{proposition}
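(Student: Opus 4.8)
The plan is to prove both the monotonicity of $r \mapsto N^{x_0}(r,u)$ and the rigidity statement via the standard ``Rellich--Nečas / Almgren'' differentiation argument, reduced by translation to the case $x_0 = 0$. Introduce the two building blocks
$$
D(r) = \int_{B_r} \lvert \nabla u\rvert^2\,dx, \qquad H(r) = \int_{\partial B_r} u^2\,d\mathcal{H}^n,
$$
so that $N(r,u) = rD(r)/H(r)$. First I would record the identity $D(r) = \int_{\partial B_r} u\,\partial_\nu u\,d\mathcal{H}^n$, which comes from integrating by parts on $B_r$: the bulk term $\int_{B_r} u\,\Delta u$ vanishes because $\Delta u = 2\partial_{n+1}u\,\mathcal{H}^n|_{\Lambda(u)}$ is supported on $B'_1$ where the Signorini condition $u\,\partial_{n+1}u = 0$ forces $u\,\Delta u \equiv 0$ as a measure. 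This is the one place where the variational/PDE structure of the thin obstacle problem enters; everything after is formal calculus on $D$ and $H$.

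Next I would compute the derivatives. Differentiating $H$ after rescaling to $\partial B_1$ gives $H'(r) = \tfrac{n}{r}H(r) + 2\int_{\partial B_r} u\,\partial_\nu u\,d\mathcal{H}^n = \tfrac{n}{r}H(r) + 2D(r)$. For $D$, the domain-variation (Rellich) identity yields $D'(r) = \tfrac{n-1}{r}D(r) + 2\int_{\partial B_r}(\partial_\nu u)^2\,d\mathcal{H}^n$; here one must check that the first variation of the energy produces no extra boundary contribution from $\Lambda(u)$, again using the Signorini conditions and the even symmetry in $x_{n+1}$. Combining these, a direct computation gives
$$
\frac{N'(r)}{N(r)} = \frac1r + \frac{D'(r)}{D(r)} - \frac{H'(r)}{H(r)}
= \frac{2}{D(r)}\left( \int_{\partial B_r}(\partial_\nu u)^2\,d\mathcal{H}^n - \frac{\left(\int_{\partial B_r} u\,\partial_\nu u\,d\mathcal{H}^n\right)^2}{\int_{\partial B_r} u^2\,d\mathcal{H}^n}\right),
$$
and the bracket is nonnegative by Cauchy--Schwarz on $\partial B_r$ applied to $u$ and $\partial_\nu u$. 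This proves $N'(r)\ge 0$, hence monotonicity.

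For the rigidity claim, note that $N(r,u)\equiv\lambda$ on $(0,1)$ forces equality in the Cauchy--Schwarz step for a.e.\ $r$, i.e.\ $\partial_\nu u(x) = c(r)\,u(x)$ on $\partial B_r$ for some scalar $c(r)$; plugging back into $N\equiv\lambda$ identifies $c(r) = \lambda/r$, which is exactly the Euler identity $x\cdot\nabla u = \lambda u$ characterizing $\lambda$-homogeneity. Conversely, if $u$ is $\lambda$-homogeneous then $D(r) = r^{n-1+2\lambda}D(1)$ and $H(r) = r^{n+2\lambda}H(1)$ by scaling, so $N(r,u)\equiv\lambda$. The main obstacle — and the only genuinely nontrivial point — is justifying the two differentiation identities rigorously given the limited regularity of $u$ (one has $u \in C^{1,\alpha}_{loc}$ up to $B'_1$ from Theorem \ref{c11a}, but $\nabla u$ is only Hölder, not smooth, across the contact set); this is handled by working with the variational inequality directly, or by the standard approximation/smoothing argument, so I would simply cite \cite{acs08} or \cite{psu12} for these two identities rather than reproving them, as the statement already does.
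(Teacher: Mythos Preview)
Your proposal is correct and is precisely the standard Almgren argument found in \cite{acs08} and \cite{psu12}, which is all the paper itself invokes: the paper does not give a proof but simply cites those references, so you have in fact supplied more detail than the paper does. The computation of $H'(r)$, the Rellich identity for $D'(r)$, the Cauchy--Schwarz step, and the rigidity via the equality case are all exactly what one finds in those sources, and your remark that the only delicate point is justifying the differentiation identities at the regularity level available (handled there by the variational structure) is accurate.
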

	
	Now another fundamental monotonicity formula.
	\begin{lemma}\label{lemma2cose}
		Let $u\in H^1(B_1)$ a solution and $x_0\in \Gamma(u)$, we define $$H^{x_0}(r,u):=\int_{\partial B_r(x_0)}u^2\,d\mathcal{H}^n,$$ then the function
		$$r\mapsto \frac{H^{x_0}(r,u)}{r^{n+2\lambda}}$$ is monotone increasing for $r\in(0,1-\lvert x_0\rvert),$ where $\lambda= N^{x_0}(0^+,u)$.
		\begin{proof} For the proof see \cite{acs08} or \cite{psu12}. We only sketch the main idea for the sake of completeness.
			
			Let $H(r)=H^{x_0}(r,u)$ and $D(r)=\int_{B_r(x_0)}\lvert\nabla u \rvert^2 \,dx$, since $$H'(r)=\frac nrH(r)+2D(r),$$ then $$r\frac{H'(r)}{H(r)}=n+2\frac{rD(r)}{H(r)}\ge n+2\lambda,$$  by monotonicity of Almgren's frequency.
			
			Integrating this inequality we obtain the claim.
		\end{proof}
	\end{lemma}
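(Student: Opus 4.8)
\textbf{Proof proposal for Lemma \ref{lemma2cose}.}

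The plan is to reduce the monotonicity of $r\mapsto H^{x_0}(r,u)/r^{n+2\lambda}$ to a differential inequality for the logarithmic derivative of $H^{x_0}(r,u)$, exactly as sketched, but filling in the two points that actually need care: the Rellich--Pohozaev-type identity $H'(r)=\tfrac nr H(r)+2D(r)$, and the positivity of $H$. Throughout write $H(r)=H^{x_0}(r,u)$ and $D(r)=\int_{B_r(x_0)}\lvert\nabla u\rvert^2\,dx$, and assume without loss of generality $x_0=0$.

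First I would establish that $H(r)>0$ for every $r\in(0,1-\lvert x_0\rvert)$, so that $\log H(r)$ is well defined. Indeed $u$ is a nontrivial solution (otherwise $\Gamma(u)=\emptyset$), and by Theorem \ref{c11a} it is continuous up to $B'_1$; if $H(r_0)=0$ for some $r_0$, then $u\equiv0$ on $\partial B_{r_0}$, and since $u$ is harmonic in $B_{r_0}\setminus\Lambda(u)$, nonnegative on $B'_{r_0}$ and even in $x_{n+1}$, a maximum-principle/unique-continuation argument forces $u\equiv0$ in $B_{r_0}$, contradicting $x_0\in\Gamma(u)$. (This is the standard nondegeneracy needed to define the Almgren quotient in the first place, so one may also simply invoke it from \cite{acs08,psu12}.)

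Next I would prove the identity. Rescaling to the unit sphere, $H(r)=r^n\int_{\partial B_1}u(r\omega)^2\,d\mathcal H^n(\omega)$, and differentiating in $r$ gives
\begin{equation*}
H'(r)=\frac nr H(r)+2\int_{\partial B_r}u\,\partial_\nu u\,d\mathcal H^n.
\end{equation*}
Applying the divergence theorem to $\operatorname{div}(u\nabla u)=\lvert\nabla u\rvert^2+u\Delta u$ on $B_r$ yields $\int_{\partial B_r}u\,\partial_\nu u=D(r)+\int_{B_r}u\,\Delta u$. By the Euler--Lagrange equations, $\Delta u=2\partial_{n+1}u\,\mathcal H^n\!\!\restriction_{\Lambda(u)}$, and on $\Lambda(u)$ we have $u=0$; hence $\int_{B_r}u\,\Delta u=0$ and $\int_{\partial B_r}u\,\partial_\nu u=D(r)$, which gives $H'(r)=\tfrac nr H(r)+2D(r)$. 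This is the step I expect to require the most attention: one must justify the integration by parts in the presence of the singular measure (e.g.\ by approximating $B_r$ from inside, or working in $B_r\cap\{\lvert x_{n+1}\rvert>\delta\}$ and using $\partial_{n+1}u\le0$, $u\ge0$ on $B'_r$ together with the continuity of $u$ and $\nabla u$ up to $B'_1$ from Theorem \ref{c11a}), but the contact-set term vanishes for the clean reason that $u$ itself vanishes there.

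Finally I would combine this with Almgren monotonicity. Dividing by $H(r)>0$,
\begin{equation*}
\frac{d}{dr}\log H(r)=\frac{H'(r)}{H(r)}=\frac nr+\frac{2D(r)}{H(r)}=\frac nr+\frac 2r\,N^{x_0}(r,u).
\end{equation*}
By Proposition \ref{a}, $r\mapsto N^{x_0}(r,u)$ is monotone increasing, so $N^{x_0}(r,u)\ge\lim_{s\to0^+}N^{x_0}(s,u)=\lambda$ for all $r\in(0,1-\lvert x_0\rvert)$. Therefore $\frac{d}{dr}\log H(r)\ge\frac{n+2\lambda}{r}=\frac{d}{dr}\log\bigl(r^{n+2\lambda}\bigr)$, i.e.
\begin{equation*}
\frac{d}{dr}\log\!\left(\frac{H(r)}{r^{n+2\lambda}}\right)\ge0,
\end{equation*}
which is precisely the asserted monotonicity of $r\mapsto H^{x_0}(r,u)/r^{n+2\lambda}$.
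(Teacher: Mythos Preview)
Your proposal is correct and follows exactly the approach sketched in the paper: derive the identity $H'(r)=\tfrac nr H(r)+2D(r)$, divide by $H(r)$, invoke the Almgren monotonicity $N^{x_0}(r,u)\ge\lambda$, and integrate. You simply flesh out the two steps the paper leaves implicit (positivity of $H$ and the justification that the contact-set contribution $\int u\,\Delta u$ vanishes because $u=0$ on $\Lambda(u)$), which is appropriate but not a different route.
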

	Now the proposition that shows the existence of a blow-up and the estimate $N^{x_0}(0^+,u)>1$ for all $x_0\in\Gamma(u)$.
	\begin{proposition}\label{blow}
		Let $u\in H^1(B_1)\cap C^{1,\alpha}_{loc}(B_1^+)$ a non-trivial solution and let $\{u_r\}_{r>0}$ the rescaled of $u$, defined as \begin{equation*}\label{rescaled}
			u_{r}(x):=\frac{u(x_0+rx)}{\left(\frac{1}{r^n}\int_{\partial B_r(x_0)}u^2\,d\mathcal{H}^n\right)^\frac12},
		\end{equation*} then there is $u_0\in H^1_{loc}(\mathbb{R}^{n+1})\cap C^{1,\alpha}_{loc}(\overline{\mathbb{R}^{n+1}_+})$, up to reducing $\alpha$, which is a non-trivial $\lambda-$homogeneous global solution, such that $u_{r}\to u_0$ in $C^{1,\alpha}_{loc}(\overline { \mathbb{R}_+^{n+1}})$, up to subsequences, with $\lambda=N^{x_0}(0^+,u)$. 
		
		Moreover, if $x_0\in\Gamma(u)$, then $N^{x_0}(0^+,u)\ge1+\alpha>1$.
		\begin{proof} 
		For the sake of completeness, we will give the idea of the proof, for the first part, and the complete proof of the second part. For more details see \cite{psu12} or \cite{survey}.
			
			Since for all $R>0$ $\lVert u_r\rVert_{L^2(\partial B_R)}\le C(R),$ and $\lVert \nabla u_r\rVert_{L^2(B_R)}\le C(R),$ then, by Poincaré inequality, $\lVert u_r\rVert_{L^2(B_R)}\le C(R).$ 
			
			Therefore for all $ R>0$, we have $$\lVert u_r\rVert _{C^{1,\alpha}(B_{\frac R2}^+\cup B'_{\frac R2})}\le C\lVert u_r\rVert_{L^2(B_R)}\le C(R),$$ by Theorem \ref{c11a}. Thus, up to reducing $\alpha$, we deduce the convergence in $C^{1,\alpha}_{loc}(\overline { \mathbb{R}_+^{n+1}})$, up to subsequences, to some function $u_0\in C^{1,\alpha}_{loc}(\overline { \mathbb{R}_+^{n+1}})$, that is a global solution by this convergence.
			
			Moreover $$N(r,u_0)=\lim_{r_k\to0} N(r,u_{r_k})=\lim_{r_k\to0}N^{x_0}(r_kr,u)=N^{x_0}(0^+,u)=\lambda,$$ then $u_0$ is $\lambda-$homogeneous.
			
			Now let $x_0\in\Gamma(u)$, then $ u(x_0)=\lvert \nabla u (x_0)\rvert=0$, using the Signorini ambiguous conditions and $C^{1,\alpha}$ regularity, thus $u_0(0)=\lvert\nabla u_0 (0)\rvert=0$, by $C^{1,\alpha}$ convergence.
			
			Moreover, since $u_0$ is non-trivial and homogenenous, we can choose, wlog, $x\in B_1^+$ such that $u_0(x)\not=0$, then, by $C^{1,\alpha} $ regularity, we have
			\begin{equation*}
				\lvert \nabla u_0(y)\rvert \le C\lvert y \rvert^\alpha \quad \forall y\in B_1^+ 
			\end{equation*}
			for some $C>0$. Therefore for all $ y\in B_1^+$
			$$\frac {u_0(y)}{\lvert y\rvert^{1+\alpha}}=\frac {1}{\lvert y\rvert^{1+\alpha}}\int_0^1 \nabla u_0(ty)\cdot y \,dt\le\frac{1}{\lvert y\rvert^{\alpha}}\int_0^1 \lvert \nabla u_0(ty) \rvert \,dt \le C,$$ thus for all $ t\in(0,1)$ $$t^{\lambda-(1+\alpha)}\frac{u_0(x)}{\lvert x\rvert^{1+\alpha}}=\frac { u_0(t x)}{\lvert tx\rvert^{1+\alpha}}\le C $$
			i.e. $\lambda=N^{x_0}(0^+,u)\ge1+\alpha>1$, as $t\to0^+$.
		\end{proof}
	\end{proposition}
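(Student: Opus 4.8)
The plan is to follow the standard blow-up scheme, separating the statement into a compactness part and the lower bound $\lambda\ge 1+\alpha$.

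\emph{Step 1: a priori bounds for the rescalings.} I would first record that the chosen normalization makes $\int_{\partial B_1}u_r^2\,d\mathcal{H}^n=1$, that each $u_r$ is again a solution of the thin obstacle problem on $B_{1/r}$ (rescaling preserves the Euler--Lagrange system and the symmetry in $x_{n+1}$), and that $N(\rho,u_r)=N^{x_0}(\rho r,u)$ for every $\rho>0$. By Proposition \ref{a} the function $r\mapsto N^{x_0}(r,u)$ is monotone increasing, so $\lambda:=N^{x_0}(0^+,u)$ exists, is finite, and $N(\rho,u_r)=N^{x_0}(\rho r,u)\le N^{x_0}(\rho_0,u)=:\Lambda_0$ whenever $\rho r\le\rho_0$. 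Integrating the identity $\frac{d}{d\rho}\log\!\big(H^{0}(\rho,u_r)/\rho^{n}\big)=\frac{2}{\rho}N(\rho,u_r)$ (cf.\ Lemma \ref{lemma2cose}) from $1$ to $R$ then gives $\int_{\partial B_R}u_r^2\,d\mathcal{H}^n\le R^{\,n+2\Lambda_0}=:C(R)$ for $r$ small enough (depending on $R$), whence $\int_{B_R}\lvert\nabla u_r\rvert^2\,dx=\frac{N(R,u_r)}{R}\int_{\partial B_R}u_r^2\,d\mathcal{H}^n\le C(R)$ and, by the Poincaré inequality, $\lVert u_r\rVert_{L^2(B_R)}\le C(R)$.

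\emph{Step 2: compactness and identification of the blow-up.} Applying Theorem \ref{c11a} to $u_r$ on each $B_R$ gives $\lVert u_r\rVert_{C^{1,\alpha}(B_{R/2}^+\cup B'_{R/2})}\le C\lVert u_r\rVert_{L^2(B_R)}\le C(R)$; by Ascoli--Arzelà (compact embedding $C^{1,\alpha}\hookrightarrow C^{1,\alpha'}$ for $\alpha'<\alpha$) together with a diagonal argument in $R\to\infty$, a subsequence $u_{r_k}$ converges in $C^{1,\alpha'}_{loc}(\overline{\mathbb{R}^{n+1}_+})$ to some $u_0$, and relabelling $\alpha'$ as $\alpha$ ("up to reducing $\alpha$") yields the stated convergence. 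Then $\int_{\partial B_1}u_0^2\,d\mathcal{H}^n=\lim_k\int_{\partial B_1}u_{r_k}^2\,d\mathcal{H}^n=1$, so $u_0$ is non-trivial; $u_0$ is a global solution because the Signorini conditions and the evenness in $x_{n+1}$ pass to the $C^{1,\alpha}_{loc}$ limit (equivalently, local minimality passes to the limit); and $N(\rho,u_0)=\lim_k N(\rho,u_{r_k})=\lim_k N^{x_0}(\rho r_k,u)=\lambda$ for all $\rho>0$ — using $H^{0}(\rho,u_0)>0$, which holds since $u_0\not\equiv 0$ and $\rho\mapsto H^{0}(\rho,u_0)/\rho^{n}$ is nondecreasing — so Proposition \ref{a} forces $u_0$ to be $\lambda$-homogeneous.

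\emph{Step 3: the lower bound at free boundary points.} Assume $x_0\in\Gamma(u)$. Since $u$ is continuous and vanishes on $\Lambda(u)$, $u(x_0)=0$; since $x_0$ minimizes $u|_{B'_1}$, $\nabla'u(x_0)=0$; and $\partial_{n+1}u(x_0)=0$ by evenness and $C^{1}$ regularity up to $B'_1$; hence $\nabla u(x_0)=0$. Passing to the limit, $u_0(0)=0$ and $\nabla u_0(0)=0$. As $u_0$ is non-trivial and harmonic in $B_1^+$, unique continuation yields a point $x\in B_1^+$ with $u_0(x)\neq 0$. From $\nabla u_0(0)=0$ and the $C^{1,\alpha}$ bound one gets $\lvert\nabla u_0(y)\rvert\le C\lvert y\rvert^{\alpha}$ on $B_1^+$, and integrating along the segment from $0$ to $y$, $\lvert u_0(y)\rvert\le C'\lvert y\rvert^{1+\alpha}$. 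Choosing $y=tx$ with $t\in(0,1)$ and using $\lambda$-homogeneity, $t^{\lambda-(1+\alpha)}\,\lvert u_0(x)\rvert\,\lvert x\rvert^{-(1+\alpha)}\le C'$; since $\lvert u_0(x)\rvert>0$, letting $t\to0^+$ forces $\lambda-(1+\alpha)\ge 0$, i.e.\ $N^{x_0}(0^+,u)=\lambda\ge 1+\alpha>1$.

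\emph{Expected main obstacle.} The least automatic parts are Step 1 — extracting the uniform trace and energy bounds, which genuinely needs Almgren monotonicity combined with the $H^{x_0}$-growth from Lemma \ref{lemma2cose} — and the verification in Step 2 that the $C^{1,\alpha}_{loc}$ limit is a bona fide global solution with $H^{0}(\rho,u_0)>0$, so that the frequency of $u_0$ is defined and the passage $N(\rho,u_{r_k})\to N(\rho,u_0)$ is legitimate. Step 3 is then just the elementary chain of inequalities above.
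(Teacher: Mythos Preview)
Your proof follows essentially the same blow-up scheme as the paper, and Steps~1--2 are in fact more detailed than the paper's sketch (the explicit use of the logarithmic derivative of $H$ and the diagonal argument are exactly the content behind the paper's one-line bounds). Step~3 also matches the paper's argument line by line.

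There is, however, one genuine slip in Step~3: your justification ``$\partial_{n+1}u(x_0)=0$ by evenness and $C^{1}$ regularity up to $B'_1$'' is false as stated. Evenness in $x_{n+1}$ together with one-sided $C^{1,\alpha}$ regularity on $\overline{B_1^+}$ does \emph{not} force the normal derivative to vanish on $B'_1$; the solution $-\lvert x_{n+1}\rvert$ is even, smooth on each side, and has $\partial_{n+1}u\equiv -1$ on $B'_1$ from above. The correct argument---and this is what the paper means by ``using the Signorini ambiguous conditions and $C^{1,\alpha}$ regularity''---is that $x_0\in\Gamma(u)=\partial'\Lambda(u)$ is a limit of points $x_k'\in B'_1$ with $u(x_k')>0$; the complementarity condition $u\,\partial_{n+1}u=0$ on $B'_1$ then gives $\partial_{n+1}u(x_k')=0$, and continuity of $\partial_{n+1}u$ up to $B'_1$ (Theorem~\ref{c11a}) yields $\partial_{n+1}u(x_0)=0$. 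With this fix, your proof is complete and aligns with the paper's.
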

	
	\subsection{Eigenfunctions of spherical Laplacian}
	Now we proceed with a brief reminder of the spherical Laplacian with its eigenfunctions and eigenvalues. 
	\begin{remark}\label{spherical}
		Let $\Delta_S$ the spherical Laplacian on $\partial B_1\subset\mathbb{R}^{n+1}$, then there are
		$\{\lambda_k\}_{k\in\mathbb{N}}\subset \mathbb{R}_{\ge0}$ increasing and $\{\phi_k\}_{k\in\mathbb{N}}\subset H^1(\partial B_1)$ normalized in $L^2(\partial B_1)$, such that $$-\Delta_S \phi_k=\lambda_k \phi_k,$$ with $\{\phi_k\}_{k\in\mathbb{N}}$ orthonormal basis of $H^1(\partial B_1)$.
		
		Moreover, in spherical coordinates, \begin{equation}\label{sphh}\Delta=\frac{\partial^2}{\partial r^2}+\frac{n}{r}\frac{\partial}{\partial r}+\frac{1}{ r^2} \Delta_S,
		\end{equation} then $r^{\alpha}\phi(\theta)$ is harmonic in $\mathbb{R}^{n+1}$ if and only if $\phi$ is an eigenfunction of eigenvalue $\lambda(\alpha):=\alpha(\alpha+n-1)$. In this case $r^{\alpha}\phi(\theta)$ is a polynomial, i.e. $\alpha\in\mathbb{N}$, by the gradient estimates for harmonic functions.
		
		Furthermore, for all $  \lambda\ge0$, we define $$E(\lambda)=\{\phi\in H^1(\partial B_1): -\Delta_S \phi=\lambda \phi, \ \lVert \phi\rVert_{L^2({\partial B_1})}=1\}.$$ the (normalized) eigenspace of eigenvalue $\lambda$, then
		\begin{enumerate}
			\item For $\alpha=0$, then $\lambda_1=0$ and the eigenspace coincides with the space of constant functions.
			
			\item For $\alpha=1$, then $\lambda_2=\ldots=\lambda_{n+2}=n$ and the eigenspace (of dimension $n+1$) coincides with the space of linear functions.
			
			\item For $\alpha=2$, then $\lambda_{k}\ge2(n+1)=\lambda(2)$ for all $ k\ge n+3.$
		\end{enumerate}
		
	\end{remark}
	The following lemma, from \cite{csv17}, will be used in the proof of the epiperimetric inequality in Theorem \ref{thm1} and to show the frequency gap in Proposition \eqref{gap}.
	\begin{lemma} \label{spherical1} Let $\phi\in H^1(\partial B_1)$ with $$\phi(\theta)=\sum_{k=1}^{\infty} c_k \phi_k(\theta)\in H^1(\partial B_1),$$ where $\phi_k$ normalized eigenfunctions of spherical Laplacian as above, and let $r^\frac32\phi(\theta)$, the $\frac32-$homogeneous extension, then 
		\begin{equation}\label{prima}
			W_\frac32(r^\frac32 \phi)=\frac{1}{n+2}\sum_{k=1}^{\infty}\left(\lambda_k-\lambda\left(\frac32\right)\right)c_k^2.
		\end{equation}
		Moreover, if $c\in H^1(\partial B_1)$ such that $r^{\frac32+t}c$ is a solution, then 
		\begin{equation}\label{terza}
			W_\frac32(r^{\frac32+t}c)=t\norm c_{L^2(\partial B_1)}^2
		\end{equation}
		and
		\begin{equation}\label{quarta}
			W_\frac32(r^{\frac32}c)=\left(1+\frac{t}{n+2}\right)W_\frac32(r^{\frac32+t}c).
		\end{equation}
		\begin{proof}
			The proof is in \cite{csv17}, but we report for the sake of completeness. 
			
			Since for a function $w$ that is $ \lambda-$homogeneous holds \begin{equation}\label{www}\int_{B_1}\lvert \nabla w\rvert^2 \,dx=\frac{1}{n+2\lambda-1}\left(\int_{\partial B_1}\lvert \nabla_\theta w\rvert^2\,d\mathcal{H}^n+\lambda^2\int_{\partial B_1}w^2\,d\mathcal{H}^n\right),
			\end{equation} then 	$$\begin{aligned}
				&W_\frac32(r^\frac32\phi)=\frac{1}{n+2}\left(\int_{\partial B_1}\lvert\nabla_\theta \phi\rvert^2 \,dx+\frac94\int_{\partial B_1}\phi^2\,d\mathcal{H}^n\right)-\frac32\int_{\partial B_1}\phi^2\,d\mathcal{H}^n=\\&=\sum_{k=1}^\infty c_k^2\left(\frac{\lambda_k+\frac94}{n+2}-\frac32\right)=\frac{1}{n+2}\sum_{k=1}^{\infty}\left(\lambda_k-\lambda\left(\frac32\right)\right)c_k^2,
			\end{aligned}$$ that is\eqref{prima}
			Moreover, if $r^{\frac32+t}c$ is a solution, then $W_{\frac32+t}(r^{\frac32+t}c)=0$, therefore \begin{equation*}\label{}
				\begin{aligned} 
					W_\frac32(r^{\frac32+t}c)&=\lVert \nabla (r^{\frac32+t}c)\rVert_{L^2(B_1)}^2-\frac32\lVert c \rVert_{L^2(\partial B_1)}^2=\\&=W_{\frac32+t}(r^{\frac32+t}c)+t\lVert c \rVert_{L^2(\partial B_1)}^2=t\lVert c \rVert_{L^2(\partial B_1)}^2,
				\end{aligned}
			\end{equation*}
			that is \eqref{terza}.
			Finally, since
			\begin{equation}\label{yr}
				\lVert \nabla_\theta c\rVert_{L^2(\partial B_1)}^2=\lambda\left(\frac32+t\right)\lVert c \rVert_{L^2(\partial B_1)}^2,
			\end{equation} then, using \eqref{www}, we get
			\begin{equation*}\label{}
				\begin{aligned} 
					W_\frac32(r^{\frac32}c)&=\frac{1}{n+2}\left(\lVert\nabla_\theta c \rVert_{L^2(\partial B_1)}^2+\frac94\lVert c \rVert_{L^2(\partial B_1)}^2\right)-\frac32\lVert c \rVert_{L^2(\partial B_1)}^2=\\&=\frac{1}{n+2}\left(\lVert\nabla_\theta c \rVert_{L^2(\partial B_1)}^2-\lambda\left(\frac32\right)\lVert c \rVert_{L^2(\partial B_1)}^2\right)=\\&=\frac{\lambda(\frac32+t)-\lambda(\frac32)}{n+2}\lVert c \rVert_{L^2(\partial B_1)}^2=\left(1+\frac{t}{n+2} \right)t\lVert c \rVert_{L^2(\partial B_1)}^2=\\&=\left(1+\frac{t}{n+2} \right)W_\frac32(r^{\frac32+t}c),
				\end{aligned}
			\end{equation*}
			where in the last equality we have used \eqref{terza}, then we have obtained \eqref{quarta}.
		\end{proof}
	\end{lemma}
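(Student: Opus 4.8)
The plan is to reduce all three identities to two structural ingredients already available: the homogeneity identity \eqref{www}, which expresses the Dirichlet energy on $B_1$ of a homogeneous function through boundary integrals, and the Rellich-type vanishing $W_\mu(r^\mu c)=0$ valid for any $\mu$-homogeneous solution. Everything else is bookkeeping with the eigen-expansion on $\partial B_1$.

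First, for \eqref{prima}, I would apply \eqref{www} to $w=r^{3/2}\phi$ with homogeneity $\lambda=\tfrac32$, so $n+2\lambda-1=n+2$; since on $\partial B_1$ one has $w=\phi$ and $|\nabla_\theta w|=|\nabla_\theta\phi|$, this gives $W_{3/2}(r^{3/2}\phi)=\frac{1}{n+2}\big(\|\nabla_\theta\phi\|_{L^2(\partial B_1)}^2+\tfrac94\|\phi\|_{L^2(\partial B_1)}^2\big)-\tfrac32\|\phi\|_{L^2(\partial B_1)}^2$. Expanding $\phi=\sum_k c_k\phi_k$, using $L^2(\partial B_1)$-orthonormality of the $\phi_k$ and integration by parts on the sphere (so that $\int_{\partial B_1}\nabla_\theta\phi_j\cdot\nabla_\theta\phi_k=\lambda_k\delta_{jk}$), I get $\|\nabla_\theta\phi\|^2=\sum_k\lambda_k c_k^2$ and $\|\phi\|^2=\sum_k c_k^2$. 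Collecting the coefficient of $c_k^2$ and using $\tfrac94-\tfrac32(n+2)=-\lambda(\tfrac32)$ turns $\frac{\lambda_k+9/4}{n+2}-\tfrac32$ into $\frac{\lambda_k-\lambda(3/2)}{n+2}$, which is \eqref{prima}.

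Second, for \eqref{terza}, the point is that a $\big(\tfrac32+t\big)$-homogeneous global solution $u=r^{3/2+t}c$ satisfies $W_{3/2+t}(u)=0$: because $\Delta u$ is a measure supported on the contact set where $u=0$ (Signorini condition $u\,\partial_{n+1}u=0$), one has $\int_{B_1}u\,\Delta u=0$, and integrating by parts together with $\partial_\nu u=\partial_r u=\big(\tfrac32+t\big)u$ on $\partial B_1$ gives $\int_{B_1}|\nabla u|^2=\big(\tfrac32+t\big)\int_{\partial B_1}u^2$, i.e. $W_{3/2+t}(u)=0$. Since $u^2=c^2$ on $\partial B_1$, I then write $W_{3/2}(u)=\int_{B_1}|\nabla u|^2-\tfrac32\|c\|^2=W_{3/2+t}(u)+t\|c\|^2=t\|c\|_{L^2(\partial B_1)}^2$, which is \eqref{terza}. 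For \eqref{quarta} I would first deduce \eqref{yr} by applying \eqref{www} to the same $u=r^{3/2+t}c$ and comparing with $\int_{B_1}|\nabla u|^2=\big(\tfrac32+t\big)\int_{\partial B_1}u^2$ just proved, obtaining $\|\nabla_\theta c\|^2=\big(\tfrac32+t\big)\big(n+\tfrac32+t-1\big)\|c\|^2=\lambda\big(\tfrac32+t\big)\|c\|^2$. Applying \eqref{www} to $r^{3/2}c$ as in the first step gives $W_{3/2}(r^{3/2}c)=\frac{1}{n+2}\big(\|\nabla_\theta c\|^2-\lambda(3/2)\|c\|^2\big)$; substituting \eqref{yr} and using $\lambda\big(\tfrac32+t\big)-\lambda\big(\tfrac32\big)=t(n+2+t)=(n+2)\,t\,\big(1+\tfrac{t}{n+2}\big)$ yields $W_{3/2}(r^{3/2}c)=t\big(1+\tfrac{t}{n+2}\big)\|c\|^2=\big(1+\tfrac{t}{n+2}\big)W_{3/2}(r^{3/2+t}c)$ by \eqref{terza}, which is \eqref{quarta}.

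The only genuinely non-computational step — and hence the main obstacle — is the vanishing $W_\mu(r^\mu c)=0$ for homogeneous solutions, which relies precisely on the Signorini complementarity condition to annihilate the contribution of the singular measure $\Delta u$ in $\int_{B_1}u\,\Delta u$; the identity \eqref{www} itself is elementary, obtained by writing $|\nabla w|^2=(\partial_r w)^2+r^{-2}|\nabla_\theta w|^2$ with $\partial_r w=\lambda r^{\lambda-1}c$ and integrating $r\in(0,1)$. Once these are in place, all three formulas are one-line consequences.
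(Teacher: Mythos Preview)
Your proof is correct and follows essentially the same route as the paper's: apply \eqref{www} together with the eigen-expansion for \eqref{prima}, use $W_{\frac32+t}(r^{\frac32+t}c)=0$ to obtain \eqref{terza}, and combine \eqref{www} with \eqref{yr} and the algebraic identity $\lambda(\tfrac32+t)-\lambda(\tfrac32)=t(n+2+t)$ for \eqref{quarta}. You supply two details the paper leaves implicit---the Signorini-based justification of $W_\mu(r^\mu c)=0$ via $\int_{B_1}u\,\Delta u=0$, and the derivation of \eqref{yr} by equating the two expressions for $\int_{B_1}|\nabla(r^{\frac32+t}c)|^2$---but the argument is otherwise identical.
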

	
	\subsection{Properties of $h_e$}
	Finally we recall the properties of the function $h_e$ defined in \eqref{he}, which is the only $\frac32-$homogeneous solution. 

	Notice that the latter is often demonstrated together with optimal regularity, in fact we will not use this fact, but we will prove in Proposition \ref{uniq}.
	\begin{proposition} \label{heprop} Let $h_e$ as in \eqref{he}, then
		$h_e$ is a $\frac32-$homogenenous solution of the thin obstacle problem, $h_e=0$ on $B'_1\cap\{x\cdot e\le0\}$ and the derivative in $x_{n+1}$ direction is
		\begin{equation}\label{derivhe}
			\partial_{n+1}h_e=
			\begin{cases} 
				-\frac32\lvert x'\cdot e \rvert^\frac12 & B'_1\cap\{x'\cdot e <0\}\\ 
				0 & B'_1\cap\{x'\cdot e \ge0\}.
			\end{cases}
		\end{equation}
		Moreover, for all $ \eta\in H^1(B_1)$ even across $\{x_{n+1}=0\}$, we have
		\begin{equation}\label{prophe}
			\int_{B_1} -\Delta h_e\eta\,dx=3\int_{B'_1\cap\{x'\cdot e<0\}}\eta(x',0) \lvert x'\cdot e\rvert^\frac12\,d\mathcal{H}^n,
		\end{equation} by the Euler-Lagrange equations.
		
		In particular, if $\eta\in H^1(B_1)$ such that $\eta=0$ on $B'_1\cap\{(x'\cdot e)<0\}$, then
		\begin{equation}\label{prophe1}
			\int_{B_1} \nabla h_e\cdot \nabla \eta\,dx=\frac32\int_{\partial B_1}h_e\eta\,d\mathcal{H}^n
		\end{equation}
		and $W_\frac32 (h_e)=0$, choosing $\eta=h_e$.
		
		Finally the $L^2(\partial B_1)$ projection of $h_e$ on linear functions has the form $C(x'\cdot e)$ for some $C>0$.
		\begin{proof} We only sketch the proof and we refer to \cite{fs16} for more details.
			With a slight abuse of notation, $h_e$ is $2-$dimensional, i.e. $h_e(x)=h_e(x'\cdot e,x_{n+1})$, then we can calculate explicitly $\partial_{n+1} h_e$ in polar coordinates, to deduce \eqref{derivhe}. 
			
			Moreover \eqref{prophe} is the Laplacian in measure sense for solutions and \eqref{prophe1} follows by $x\cdot \nabla=\partial _{\nu}$ on $\partial B_1$ and an integration by parts.
			
			Finally, for $L^2(\partial B_1)$ projection of $h_e$, let's consider again $h_e$ as $2-$dimensional. An explicit calculation, after a linear change of variables with a basis that includes $e$, gives the claim.
		\end{proof}
	\end{proposition}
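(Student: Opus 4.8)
The plan is to exploit that $h_e$ depends only on the two variables $t:=x'\cdot e$ and $s:=x_{n+1}$, so that $h_e(x)=w(t,s)$ with $w(t,s)=\mathrm{Re}((t+i|s|)^{3/2})$, using the principal branch of $z^{3/2}$ (cut along $(-\infty,0]$). Since $(t,s)\mapsto t+is$ maps $\{s>0\}$ biholomorphically onto the open upper half-plane, which does not meet the branch cut, $w$ is harmonic in $\{s>0\}$; hence $h_e$ is harmonic in $\{x_{n+1}>0\}$, and by even reflection in $x_{n+1}$ also in $\{x_{n+1}<0\}$. Homogeneity is immediate, as $z\mapsto z^{3/2}$ is $\tfrac32$-homogeneous under positive real scaling, so $h_e(rx)=r^{3/2}h_e(x)$. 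Letting $s\to0^+$ gives $w(t,0)=t^{3/2}$ for $t\ge 0$ and $w(t,0)=\mathrm{Re}(|t|^{3/2}e^{3\pi i/2})=0$ for $t<0$; thus $h_e\ge 0$ on $B'_1$ and $h_e\equiv 0$ on $B'_1\cap\{x'\cdot e\le 0\}$.

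Next I would compute the normal derivative. Writing $\sigma=|s|$, one has $\partial_s(t+i\sigma)^{3/2}=\mathrm{sign}(s)\,\tfrac{3i}{2}(t+i\sigma)^{1/2}$, hence $\partial_{n+1}h_e(x',0^+)=-\tfrac32\,\mathrm{Im}\big(\lim_{\sigma\to0^+}(x'\cdot e+i\sigma)^{1/2}\big)$, which is $0$ for $x'\cdot e\ge 0$ and $-\tfrac32|x'\cdot e|^{1/2}$ for $x'\cdot e<0$; this is \eqref{derivhe}. With \eqref{derivhe}, the Signorini conditions hold by inspection: $h_e\ge 0$ and $\partial_{n+1}h_e\le 0$ on $B'_1$, and $\{h_e>0\}\cap B'_1$ and $\{\partial_{n+1}h_e<0\}\cap B'_1$ are disjoint, so $h_e\,\partial_{n+1}h_e\equiv 0$ on $B'_1$. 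Together with harmonicity off the contact set $\Lambda(h_e)=B'_1\cap\{x'\cdot e\le 0\}$, evenness, and convexity of $\mathcal K$, this identifies $h_e$ as a solution. Formula \eqref{prophe} is then the measure form of the Euler--Lagrange equation, $\Delta h_e=2\partial_{n+1}h_e\,\mathcal H^n|_{\Lambda(h_e)}$, evaluated via \eqref{derivhe} and tested against an even $\eta\in H^1(B_1)$: integrating by parts over $B_1^+$ and $B_1^-$ separately and using that $\partial_{n+1}h_e$ is odd across $\{x_{n+1}=0\}$ turns the two interface terms into $3\int_{B'_1\cap\{x'\cdot e<0\}}\eta\,|x'\cdot e|^{1/2}\,d\mathcal H^n$.

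For \eqref{prophe1}, given $\eta\in H^1(B_1)$ vanishing on $B'_1\cap\{x'\cdot e<0\}$, I would first reduce to $\eta$ even in $x_{n+1}$ (replacing $\eta$ by its even part: the odd part pairs to zero with $\nabla h_e$ by parity, and the even part still vanishes where required), and then apply Green's identity on $B_1^+\cup B_1^-$. By \eqref{prophe} the interface contribution equals $\int_{B_1}(-\Delta h_e)\eta$, which vanishes because $\eta=0$ on the support of $-\Delta h_e$; what remains is $\int_{B_1}\nabla h_e\cdot\nabla\eta=\int_{\partial B_1}(x\cdot\nabla h_e)\,\eta\,d\mathcal H^n=\tfrac32\int_{\partial B_1}h_e\,\eta\,d\mathcal H^n$, the last step being Euler's identity $x\cdot\nabla h_e=\tfrac32 h_e$ for the $\tfrac32$-homogeneous $h_e$. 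Taking $\eta=h_e$ (which vanishes on $B'_1\cap\{x'\cdot e<0\}$) gives $\int_{B_1}|\nabla h_e|^2=\tfrac32\int_{\partial B_1}h_e^2$, i.e. $W_{\frac32}(h_e)=0$.

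Finally, for the $L^2(\partial B_1)$-projection onto the linear functions (the eigenspace of Remark \ref{spherical}(2)): $h_e$ is invariant under every rotation of $\mathbb R^{n+1}$ fixing $e$ and the $x_{n+1}$-axis, and even in $x_{n+1}$, and the orthogonal projection commutes with these isometries, so the projection is a linear function sharing these invariances, hence $C(x'\cdot e)$ for some $C\in\mathbb R$. Up to a positive normalization constant, $C=\int_{\partial B_1}h_e\,(x'\cdot e)\,d\mathcal H^n$, and this I would evaluate by choosing coordinates adapted to the $2$-plane $\mathrm{span}(e,e_{n+1})$: the integral factors as a strictly positive geometric constant times $\int_0^{2\pi}\mathrm{Re}((\cos\phi+i|\sin\phi|)^{3/2})\cos\phi\,d\phi$, and a direct computation (splitting $[0,\pi]\cup[\pi,2\pi]$ and using product-to-sum formulas) gives the value $\tfrac{12}{5}>0$, whence $C>0$. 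I expect the only genuinely delicate points to be the branch-cut bookkeeping behind $w$ and \eqref{derivhe}, and this final positivity computation; the rest is integration by parts plus symmetry.
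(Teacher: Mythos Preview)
Your proposal is correct and follows essentially the same route as the paper's sketch: the two-dimensional reduction $h_e(x)=w(x'\cdot e,x_{n+1})$, the explicit computation of $\partial_{n+1}h_e$ (you via complex differentiation of $z^{3/2}$, the paper via polar coordinates---equivalent), the measure Laplacian from the Euler--Lagrange equation for \eqref{prophe}, integration by parts together with $x\cdot\nabla h_e=\tfrac32 h_e$ for \eqref{prophe1}, and an explicit integral in adapted coordinates for the projection. Your symmetry argument pinning down the form $C(x'\cdot e)$ before computing $C$, and your explicit evaluation $\tfrac{12}{5}$, go beyond the paper's terse ``explicit calculation'', but the strategy is the same.
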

	\section{Epiperimetric inequality for negative energies $W_\frac32$}
	In this section we will prove Theorem \ref{thm1}. The strategy is to decompose a function $c\in H^1(\partial B_1)$ using eigenfunctions of spherical Laplacian, similar to the decomposition in \cite{csv17}. 
	
	Let $c\in H^1(\partial B_1)$, even with respect to the hyperplane $\{x_{n+1}=0\}$, then
	since $E(\lambda_2)$ is the space of linear functions, the projection in $L^2(\partial B_1)$ of $c$ has the form $c_1 (x'\cdot e)$, for some $e\in \partial B'_1$ and $c_1\ge0$.
	
	Choosing $e\in\partial B'_1$ as above, even the projection of $h_e$ in $E(\lambda_2)$ has the form $C(x'\cdot e)$, for $C>0$, by Proposition \ref{heprop}, then we can choose $C\ge0$ such that $Ch_e$ and $c$ have the same projection on $E(\lambda_2)$.
	
	Let $u_0: \partial B_1\to\mathbb{R}$ such that $u_0(\theta)=\lvert \theta_{n+1}\rvert$, therefore is even and thus orthogonal to $E(\lambda_2)$. Since $E(\lambda_1)$ is the space of constant functions, we can find $c_0\in\mathbb{R}$ such that $c_0u_0$ and $c-Ch_e$ have the same projection on $E(\lambda_1)$. Hence we can decompose $c$ as 
	\begin{equation}\label{decomposition}
		c(\theta)=Ch_e(\theta)+c_0u_0(\theta)+\phi(\theta),
	\end{equation}
	with $C\ge0$ and $$\phi=\sum_{\{k:\ \lambda_k\ge\lambda(2)\}} c_k\phi_k,$$ where $\phi_k$ are the normalized eigenfunctions as above.
	
	In the following, with a slight abuse of notations, we denote with $h_e$ the function in $\partial B_1$, i.e. $h_e(\theta).$
	
	\begin{proof}[Proof of Theorem \ref{thm1}] 
			Let $z$ a $\frac32-$homogeneous extension of its trace $c\in H^1(\partial B_1)$, then if decompose $z$ as $$z(r,\theta)=Cr^\frac32h_e(\theta)+c_0r^\frac32u_0(\theta)+r^\frac32\phi(\theta), $$ therefore the explicit competitor is $$\zeta(r,\theta)=Cr^\frac32h_e(\theta)+c_0ru_0(\theta)+r^\frac32\phi(\theta).$$
		
			First notice that $\zeta=c$ in $\partial B_1$, by \eqref{decomposition}. Moreover, since $u_0=0$ on $B'_1$, then $$\zeta=Cr^\frac32h_e(\theta)+c_0ru_0(\theta)+r^\frac32\phi(\theta)=
			Cr^\frac32h_e(\theta)+c_0r^\frac32u_0(\theta)+r^\frac32\phi=z\ge0$$ on $B'_1$, which is $\zeta\in\mathcal{K}_c$.
			
			Now we want to compute the Weiss' energy of $Cr^\frac32h_e(\theta)+c_0r^\alpha u_0(\theta)+r^\frac32\phi(\theta)$, for $\alpha=1,\frac32$. 
			By Proposition \ref{heprop}, we have $W_\frac32(h_e)=0$, therefore
			\begingroup
			\allowdisplaybreaks
			\begin{align*}
				&W_\frac32(Cr^\frac32h_e+c_0r^\alpha u_0+r^\frac32\phi)=C^2W_\frac32(h_e)+W_\frac32(c_0r^\alpha u_0+r^\frac32\phi)+\\&\qquad+2C\Biggl(\int_{B_1}\nabla (r^\frac32h_e)\cdot \nabla(c_0r^\alpha u_0+r^\frac32\phi)\,dx+\\&\qquad-\frac32\int_{\partial B_1}h_e(c_0u_0+\phi)\,d\mathcal{H}^n\Biggl) =\\&=c_0^2W_\frac32(r^\alpha u_0)+W_\frac32(r^\frac32\phi)+\\&\qquad+2c_0\Biggl( \int_{B_1}\nabla (r^\alpha u_0)\cdot \nabla (r^\frac32\phi)\,dx-\frac32\int_{\partial B_1}u_0\phi\,d\mathcal{H}^n\Biggl)+\\&\qquad+2C\Biggl( \int_{B_1}\nabla (r^\frac32h_e)\cdot \nabla (r^\frac32\phi)\,dx-\frac32\int_{\partial B_1}h_e\phi\,d\mathcal{H}^n\Biggl),
			\end{align*}%
			\endgroup where in the last equality we have used that $u_0\equiv$ 0 on $B'_1$, combined with \eqref{prophe1}.
			
			Since $x\cdot \nabla=\partial _{\nu}$ on $\partial B_1$ and using an integration by parts, it follows that  $$\begin{aligned}
				&W_\frac32(Cr^\frac32h_e+c_0r^\alpha u_0+r^\frac32\phi)=c_0^2W_\frac32(r^\alpha u_0)+W_\frac32(r^\frac32\phi)+\\&\qquad+2c_0\Biggl( \int_{B_1}\nabla (r^\alpha u_0)\cdot \nabla (r^\frac32\phi)\,dx-\frac32\int_{\partial B_1}u_0\phi\,d\mathcal{H}^n\Biggl)+\\&\qquad+2C\Biggl(\int_{B_1} -\Delta (r^\frac32h_e) r^\frac32\phi \,dx\Biggl),
			\end{aligned}$$
			then
			
			\begin{equation}\label{ijkl}W_{\frac{3}{2}}(\zeta)- (1+\varepsilon)W_{\frac{3}{2}}(z)=I+J+K+L,
			\end{equation} where $$I=c_0^2\left(W_\frac32(r u_0)-(1+\varepsilon)W_\frac32(r^\frac32 u_0)\right),$$ $$J=W_\frac32(r^\frac32\phi)-(1+\varepsilon)W_\frac32(r^\frac32\phi),$$ $$\begin{aligned}K&=2c_0\Biggl( \int_{B_1}\nabla (r u_0)\cdot \nabla (r^\frac32\phi)\,dx-\frac32\int_{\partial B_1}u_0\phi\,d\mathcal{H}^n+\\&\qquad-(1+\varepsilon)\Biggl(\int_{B_1}\nabla (r^\frac32 u_0)\cdot \nabla (r^\frac32\phi)\,dx-\frac32\int_{\partial B_1}u_0\phi\,d\mathcal{H}^n\Biggl)\Biggl)
			\end{aligned}$$ and $$\begin{aligned}L&=2C\Biggl(\int_{B_1} -\Delta (r^\frac32h_e) r^\frac32\phi \,dx-(1+\varepsilon)\Biggl(\int_{B_1} -\Delta (r^\frac32h_e) r^\frac32\phi \,dx\Biggl)\Biggl).
			\end{aligned}$$
			
			For $I$, we notice that the function $-ru_0(\theta)=-\lvert x_{n+1} \rvert $ is a solution, then using \eqref{terza}, we obtain $$W_\frac32(ru_0)=W_\frac32(-r^{\frac32-\frac12}u_0)=-\frac12\lVert u_0\rVert_{L^2(\partial B_1)}^2$$ and using \eqref{quarta}, we get $$W_\frac32(r^\frac32u_0)=\left(1+\frac{-\frac12}{n+2}\right)W_\frac32(ru_0)=\left(1+\frac{-\frac12}{n+2}\right)\left(-\frac12\right)\lVert u_0\rVert_{L^2(\partial B_1)}^2,$$ therefore $$I=\left(-\frac12-\left(1+\varepsilon\right)\left(1+\frac{-\frac12}{n+2}\right)\left(-\frac12\right)\right)\lVert u_0\rVert_{L^2(\partial B_1)}^2=0,$$ since $\varepsilon =\frac1{2n+3}$, with a simple calculation.
			
			For $J$, using \eqref{prima}, we deduce that $$J=-\varepsilon W_\frac32(r^\frac32\phi)=-\frac\varepsilon{n+2}\sum_{k:\ \lambda_k\ge \lambda(2)}\left(\lambda_k-\lambda\left(\frac32\right)\right)c_k^2\le0,$$ since $\lambda(2)\ge\lambda(\frac32)$.
			
			For $K$, since $x\cdot \nabla=\partial _{\nu}$ on $\partial B_1$, then
			$$\begin{aligned}K&=2c_0\Biggl(\int_{B_1}\nabla (r u_0)\cdot \nabla (r^\frac32\phi)\,dx-\int_{\partial B_1}u_0\phi\,d\mathcal{H}^n-\frac12\int_{\partial B_1}u_0\phi\,d\mathcal{H}^n+\\&\qquad-(1+\varepsilon)\Biggl(\int_{B_1}\nabla (r^\frac32 u_0)\cdot \nabla (r^\frac32\phi)\,dx-\frac32\int_{\partial B_1}u_0\phi\,d\mathcal{H}^n\Biggl)\Biggl)
				=\\&=2c_0\Biggl( \int_{B_1}-\Delta (r u_0)r^\frac32\phi\,dx-\frac12\int_{\partial B_1}u_0\phi\,d\mathcal{H}^n+\\&\qquad-(1+\varepsilon)\Biggl(\int_{B_1}-\Delta (r^\frac32 u_0)r^\frac32\phi\,dx\Biggl)\Biggl),\end{aligned}$$ where we have used the integration by parts.
			
			Now, by \eqref{sphh}, we get $$\Delta (r^\alpha u_0)=\lambda(\alpha)r^{\alpha-2}u_0+r^{\alpha-2}\Delta_S u_0,$$ with $\lambda(1)=n$ and $\lambda(\frac32)=\frac32n+\frac34$, then
			$$\begin{aligned}K&=2c_0\Biggl( \int_{B_1}-nr^{-1}u_0r^\frac32\phi\,dx-\int_{B_1}r^{-1}\Delta_S u_0r^\frac32\phi\,dx-\frac12\int_{\partial B_1}u_0\phi\,d\mathcal{H}^n+\\&\qquad-(1+\varepsilon)\Biggl(\int_{B_1}-\left(\frac32n+\frac34\right)r^{-\frac12}u_0r^\frac32\phi\,dx-\int_{B_1}r^{-\frac12}\Delta_S u_0r^\frac32\phi\,dx\Biggl)\Biggl)=\\&=2c_0\Biggl( -\frac{n}{n+\frac32}\int_{\partial B_1}u_0\phi\,d\mathcal{H}^n-\frac{1}{n+\frac32}\int_{\partial B_1}\Delta_S u_0\phi\,d\mathcal{H}^n+\\&\qquad-\frac12\int_{\partial B_1}u_0\phi\,d\mathcal{H}^n-(1+\varepsilon)\Biggl(-\left(\frac32n+\frac34\right)\left(\frac{1}{n+2}\right)\int_{\partial B_1}u_0\phi\,d\mathcal{H}^n+\\&\qquad-\frac{1}{n+2}\int_{\partial B_1}\Delta_S u_0\phi\,d\mathcal{H}^n\Biggl)\Biggl),\end{aligned}$$ where in the last equality we have used that if $w$ is $\lambda-$homogeneous, then $$\int_{B_1} w\,dx=\frac1{n+\lambda+1}\int_{\partial B_1} w\,d\mathcal{H}^n,$$ with a simple change of variables. 
			
			Hence, we obtain
			$$\begin{aligned}K&=2c_0\Biggl( -\frac{n}{n+\frac32}-\frac12+\left(1+\varepsilon\right)\left(\frac32n+\frac34\right)\left(\frac{1}{n+2}\right)\Biggl) \int_{\partial B_1}u_0\phi\,d\mathcal{H}^n +\\&+2c_0\Biggl(-\frac{1}{n+\frac32}+\left(1+\varepsilon\right)\left(\frac{1}{n+2}\right)\Biggl)\int_{\partial B_1}\Delta_S u_0\phi\,d\mathcal{H}^n=0,
			\end{aligned}$$ since $\varepsilon =\frac{1}{2n+3}$, with a simple calculation. 
			
			For $L$, by \eqref{prophe}, we have $$L=-2C\varepsilon\left( \int_{B_1} -\Delta(r^\frac32h_e)r^\frac32 \phi\,dx\right)=-6C\varepsilon\int_{B'_1\cap\{x'\cdot e<0\}}\phi \lvert x'\cdot e\rvert^\frac12\,d\mathcal{H}^n\le0,$$ since $h_e(\theta)=u_0(\theta)=0$ for $\theta\in B'_1\cap\{x'\cdot e<0\}$, then $\phi=c\ge0$ in $B'_1\cap\{x'\cdot e<0\}$.
			
			Finally, since $I,J,K,L\le0$, we conclude using \eqref{ijkl}.
		\end{proof}
	\begin{remark}\label{0} This epiperimetric inequality is an equality if $I=J=K=L=0$. In particular, by $J=0$, we deduce $\phi\equiv0$, that is $$c=Ch_e(\theta)+c_0u_0(\theta),$$ for some $C\ge0$.
	\end{remark}
	
	\section{Frequency gap} By an epiperimetric inequality for negative energies $W_{\frac32}$, we can deduce a backward frequency gap for the frequency $\frac32$. In particular, the constant $\varepsilon=\frac{1}{2n+3}$ in Theorem \ref{thm1} is the best possible constant, since we can show the backward frequency gap until 1, that is Proposition \ref{gap}. %In other words, we show that in $(1,\frac32)$ there are not admissible frequencies.
	
	\begin{proof}[Proof of Proposition \ref{gap}] Let $c\in H^1(\partial B_1)$ a trace of a $(\frac32+t)-$homogeneous global solution with $t<0$, say $r^{\frac32+t}c(\theta)$. Therefore, for the first part, it is sufficent to check that $t\le -\frac12$.
			
			Since $t<0$, then $W_{\frac32}(r^{\frac32+t}c)=t\norm c_{L^2(\partial B_1)}^2<0 $ by \eqref{terza}, thus, using the epiperimetric inequality for negative energies, i.e. Theorem \ref{thm1}, we deduce that $$\begin{aligned}W_{\frac32}(r^{\frac32+t}c)&\le W_{\frac32}(\zeta)\le (1+\varepsilon)W_{\frac32}(r^{\frac32}c)=\\&=\left(1+\frac1{2n+3} \right)\left(1+\frac{t}{n+2}\right)W_{\frac32}(r^{\frac32+t}c),
			\end{aligned}$$ where we have used \eqref{quarta}.
			
			Hence, since we have a negative energies, we get $$\left(1+\frac1{2n+3}\right)\left(1+\frac{t}{n+2}\right)\le 1, $$ which is, after a simple computation, $t\le- \frac12$, which is what we wanted to prove.
			 
	Now the estimate \eqref{f} becomes trivial.
	In fact, let $x_0\in\Gamma(u)$ and $u_0$ the blow-up of $u$ around $x_0$ defined as above, then by Proposition \ref{blow}, $u_0$ is a global solution $\lambda-$homogeneous, with $\lambda=N^{x_0}(0^+,u)\ge1+\alpha>1$. 
			
			But $u_0$ cannot be $\lambda-$homogenenous with $\lambda\in (1,\frac32)$, then we have $\lambda\ge\frac32$, which is \eqref{f}.
		\end{proof}	
	\section{Optimal regularity}
	For the sake of completeness, we show how to use the estimate \eqref{f} for the proof of optimal $C^{1,\frac12} $ regularity.
	
	By Lemma \ref{lemma2cose} we obtain that if $K\subset B_1$ is a compact set, then for all $ r\in(0,1-\lvert x_0\rvert)$
	$$H^{x_0}(r)\le C \norm u _{L^2(B_1)}^2r^{n+2\lambda}\le C \norm u _{L^2(B_1)}^2r^{n+3} \quad \forall x_0\in \Gamma(u)\cap K,$$ since $\lambda\ge\frac32$,
	thus for all $ r\in(0,1-\lvert x_0\rvert)$ \begin{equation}\label{funda}
		\norm u _{L^2(B_r(x_0))}\le C \norm u _{L^2(B_1)}r^{\frac n2+2} \quad \forall x_0\in \Gamma(u)\cap K,
	\end{equation}since $\norm u _{L^2(B_r(x_0))}^2=\int_0^rH^{x_0}(s)\,ds,$
	
	With this estimate, we can prove the $C^{1,\frac12}$ regularity. Roughly speaking, we can use the gradient estimates for harmonic functions far from the free boundary, and we the estimate \eqref{funda} near the free boundary.
	\begin{theorem} \label{c112}($C^{1,\frac12}$ estimate)
		Let $u\in H^1(B_1)$ a solution of the thin ostacle problem, then $u\in C^{1,\frac12}_{loc}(B_1^+\cup B'_1)$ with the estimate $$\norm u_{C^{1,\frac12}(B_{\frac12}^+\cup B'_{\frac12})}\le C\norm u_{L^2(B_1)},$$ for some $C>0$
		\begin{proof} We define $d(x)=\mbox{dist}(x,\Gamma(u))\le2$, for $x\in B_1$.
			
			Let $x,y\in B_\frac12^+$ such that $d(x)=\lvert x-x_0\rvert$ and $d(y)=\lvert y-y_0\rvert$, for some $x_0,y_0\in\Gamma(u)$. We can suppose wlog $\lvert x-y\rvert\le\frac1{64}$ and $d(x)\ge d(y)$. 
			
			Notice that %$B_{d(x)}(x)\cap B'_1$ is fully contained in $\Lambda(u)$ or in its complements in $B'_1$, then 
			$u$ or the odd symmetric of $u$ respect to $\{x_{n+1}=0\}$ is harmonic in $B_{d(x)}(x)\cap B_1$. With a slight abuse of notation, we denote with $u$ the function with appropriate symmetry.
			We have 2 cases:
			
			1. If $d(x)\ge\frac18$, then 
			$\lvert x-y\rvert\le\frac1{64}\le\frac{d(x)}{8},$
			i.e. $y\in B_{\frac{d(x)}{8}}(x)$. %$2d(x)\ge\frac15\ge\lvert x-y\rvert$, i.e. $y\in B_{2d(x)}(x)$
			Follows that, using the gradient estimates for harmonic functions, \begin{equation*}\begin{aligned} \lVert D^2u\rVert_{L^{\infty}(B_{\frac{d(x)}{8}}(x))}&\le\frac{C}{d(x)^{\frac{n+1}{2}+2}}\lVert u\rVert_{L^{2}(B_{\frac{d(x)}{4}}(x))}\le C\norm u_{L^2(B_1)},
			\end{aligned}\end{equation*}
			where we have used $B_{\frac{d(x)}{4}}(x)\subset B_\frac12(x)\subset B_1$.
			Therefore $$\begin{aligned} \lvert \nabla u(x)-\nabla u (y)\rvert&\le \lVert D^2u\rVert_{L^{\infty}(B_{\frac{d(x)}{8}}(x))}\lvert x-y\rvert\le C\norm u_{L^2(B_1)}\lvert x-y\rvert\le\\&\le C\norm u_{L^2(B_1)}\lvert x-y\rvert^\frac12.
			\end{aligned}$$

			2. If $d(x)<\frac18$, then
			$x_0\in B_{\frac58}$, hence we can use \eqref{funda} with $K=\overline{B_\frac58}$. By the gradient estimates for harmonic functions and \eqref{funda}, we obtain 
			\begin{equation*}\begin{aligned}\label{due} \lVert D^2u\rVert_{L^{\infty}(B_{\frac{d(x)}{2}}(x))}&\le\frac{C}{d(x)^{\frac{n+1}{2}+2}}\lVert u\rVert_{L^{2}(B_{d(x)}(x))}\le\frac{C}{d(x)^{\frac{n+1}{2}+2}}\lVert u\rVert_{L^{2}(B_{2d(x)}(x_0))}\le\\&\le
					\frac{C}{d(x)^{\frac{n+1}{2}+2}}\norm u_{L^2(B_1)}d(x)^{\frac n2+2}=C\norm u_{L^2(B_1)}d(x)^{-\frac12}.
			\end{aligned}\end{equation*} 
			where we have used that $B_{2d(x)}(x_0)\subset B_{\frac14}(x_0)\subset B_1$, since $x_0\in B_{\frac58}$.
			
			Similarly
			\begin{equation*}\begin{aligned}\label{uno} \lvert \nabla u(x)\rvert&\le\frac{C}{d(x)^{\frac{n+1}{2}+1}}\lVert u\rVert_{L^{2}(B_{d(x)}(x))}\le\frac{C}{d(x)^{\frac{n+1}{2}+1}}\lVert u\rVert_{L^{2}(B_{2d(x)}(x_0))}\le\\&\le
					\frac{C}{d(x)^{\frac{n+1}{2}+1}}\norm u_{L^2(B_1)}d(x)^{\frac n2+2}=C\norm u_{L^2(B_1)}d(x)^{\frac12},
			\end{aligned}\end{equation*} and the same for $y$.

			Now suppose that $\lvert x-y\rvert\le\frac{d(x)}{2}$, then $y\in B_{\frac{d(x)}{2}}(x)$, therefore we deduce $$\begin{aligned} \lvert \nabla u(x)-\nabla u (y)\rvert&\le \lVert D^2u\rVert_{L^{\infty}(B_{\frac{d(x)}{2}}(x))}\lvert x-y\rvert\le C\norm u_{L^2(B_1)}d(x)^{-\frac12}\lvert x-y\rvert\le\\&\le C\norm u_{L^2(B_1)}\lvert x-y\rvert^\frac12,
			\end{aligned}$$
			while, if $\lvert x-y\rvert>\frac{d(x)}{2}\ge\frac{d(y)}{2}$, then $$\begin{aligned} \lvert \nabla u(x)-\nabla u (y)\rvert&\le \lvert\nabla u(x)\rvert+\lvert\nabla u(y)\rvert\le C\norm u_{L^2(B_1)}(d(x)^{\frac12}+d(y)^\frac12)\le\\&\le C\norm u_{L^2(B_1)}\lvert x-y\rvert^\frac12,
			\end{aligned}$$ that conclude the proof.
		\end{proof}
	\end{theorem}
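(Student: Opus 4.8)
The plan is to combine the almost-monotonicity of $H^{x_0}$ near the free boundary — which, thanks to the lower bound $N^{x_0}(0^+,u)\ge\frac32$ from Proposition \ref{gap}, upgrades to the quantitative decay \eqref{funda} — with classical interior estimates for harmonic functions away from the contact set. First I would record \eqref{funda}: by Lemma \ref{lemma2cose} the map $r\mapsto H^{x_0}(r)/r^{n+2\lambda}$ is increasing, so for $x_0\in\Gamma(u)$ in a fixed compact subset $K\subset B_1$ one gets $H^{x_0}(r)\le C\norm{u}_{L^2(B_1)}^2 r^{n+2\lambda}\le C\norm{u}_{L^2(B_1)}^2 r^{n+3}$ because $\lambda\ge\frac32$, and integrating in $r$ (using $\norm{u}_{L^2(B_r(x_0))}^2=\int_0^r H^{x_0}(s)\,ds$) yields $\norm{u}_{L^2(B_r(x_0))}\le C\norm{u}_{L^2(B_1)}r^{n/2+2}$.

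Next, for $x\in B_{1/2}^+$ I would set $d(x)=\mathrm{dist}(x,\Gamma(u))$ and choose $x_0\in\Gamma(u)$ with $\abs{x-x_0}=d(x)$. The structural point is that on $B_{d(x)}(x)\cap B_1$ either $u$ itself or its odd reflection across $\{x_{n+1}=0\}$ is harmonic: off $\Lambda(u)$ the Euler--Lagrange equations give $\Delta u=0$, and the odd reflection removes the singular set when the ball meets $\{x_{n+1}=0\}$. Writing $u$ for this harmonic representative (with the a priori regularity from Theorem \ref{c11a} making the estimates licit), the interior gradient and Hessian bounds for harmonic functions give $\abs{\nabla u(x)}\le C d(x)^{-(n+1)/2-1}\norm{u}_{L^2(B_{d(x)}(x))}$ and $\norm{D^2 u}_{L^\infty(B_{d(x)/2}(x))}\le C d(x)^{-(n+1)/2-2}\norm{u}_{L^2(B_{d(x)}(x))}$. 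Using $B_{d(x)}(x)\subset B_{2d(x)}(x_0)$ together with \eqref{funda} (valid once $x_0\in B_{5/8}$, e.g. with $K=\overline{B_{5/8}}$) converts these into the scale-invariant bounds $\abs{\nabla u(x)}\le C\norm{u}_{L^2(B_1)}d(x)^{1/2}$ and $\norm{D^2 u}_{L^\infty(B_{d(x)/2}(x))}\le C\norm{u}_{L^2(B_1)}d(x)^{-1/2}$; the complementary region where $d(x)$ is bounded below by a fixed constant is handled by plain interior estimates in a fixed subball of $B_1$.

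Finally I would close via the standard distance-versus-separation dichotomy. After reducing to $\abs{x-y}$ small and assuming $d(x)\ge d(y)$: if $\abs{x-y}\le d(x)/2$, then $y\in B_{d(x)/2}(x)$ and the mean value theorem with the Hessian bound gives $\abs{\nabla u(x)-\nabla u(y)}\le\norm{D^2 u}_{L^\infty(B_{d(x)/2}(x))}\abs{x-y}\le C\norm{u}_{L^2(B_1)}d(x)^{-1/2}\abs{x-y}\le C\norm{u}_{L^2(B_1)}\abs{x-y}^{1/2}$; if instead $\abs{x-y}>d(x)/2\ge d(y)/2$, the triangle inequality with the pointwise gradient bound gives $\abs{\nabla u(x)-\nabla u(y)}\le\abs{\nabla u(x)}+\abs{\nabla u(y)}\le C\norm{u}_{L^2(B_1)}(d(x)^{1/2}+d(y)^{1/2})\le C\norm{u}_{L^2(B_1)}\abs{x-y}^{1/2}$. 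Together with the uniform $C^1$ bound this yields the claimed estimate on $B_{1/2}^+\cup B'_{1/2}$. I expect the main obstacle to be the bookkeeping of radii so that every ball invoked stays inside $B_1$ (forcing the passage to $K=\overline{B_{5/8}}$ and to $B_{1/2}$ and the smallness normalization $\abs{x-y}\le\frac1{64}$), together with a clean justification that the odd reflection is genuinely harmonic across the thin space on the relevant balls — once these are pinned down, the remaining computations are routine.
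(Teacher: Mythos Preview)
Your proposal is correct and follows essentially the same route as the paper: derive \eqref{funda} from Lemma \ref{lemma2cose} and the frequency bound $\lambda\ge\tfrac32$, exploit that $u$ (or its odd reflection) is harmonic in $B_{d(x)}(x)$, combine interior harmonic estimates with \eqref{funda} to get $\lvert\nabla u(x)\rvert\lesssim d(x)^{1/2}$ and $\lVert D^2u\rVert_{L^\infty}\lesssim d(x)^{-1/2}$, and close via the $\lvert x-y\rvert\lessgtr d(x)/2$ dichotomy. You even anticipate the exact bookkeeping choices the paper makes ($K=\overline{B_{5/8}}$, the normalization $\lvert x-y\rvert\le\tfrac1{64}$, and the split $d(x)\ge\tfrac18$ versus $d(x)<\tfrac18$).
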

\section{Characterization of $\frac32-$homogeneous solutions}
We conclude with another consequence of the epiperimetric inequality for negative energies, i.e. Theorem \ref{thm1}. Indeed, this inequality can be used to show the characterization of $\frac32-$homogeneous solutions, that are $Cr^\frac32h_e(\theta)$, for some $e\in\partial B_1'$ and $C\ge0$.
	\begin{proof}[Proof of Proposition \ref{uniq}]
		Let $z=r^\frac32 c(\theta)$, a $\frac32-$homogeneous global solution. Since the $\frac32-$Weiss' energy of a $\frac32-$homogeneous solution is 0, then $$0=W_\frac32(z)\le W_\frac32(\zeta)\le(1+\varepsilon)W_\frac32(z)=0,$$ i.e. the epiperimetric inequality is an equality. Therefore, by Remark \ref{0}, we deduce that $$z=Cr^\frac32h_e(\theta)+c_0r^\frac32u_0(\theta),$$ for some $C\ge0$, then it is sufficient to check that $c_0=0$.
		
		Since $u_0\equiv0$ on $B'_1$, using \eqref{prophe1}, we obtain \begin{equation}\label{final} 0=W_\frac32(z)=C^2W_\frac32(r^\frac32h_e)+c_0^2W_\frac32(r^\frac32u_0)=c_0^2W_\frac32(r^\frac32u_0),
			\end{equation} where we have used Proposition \ref{heprop} to have $W_\frac32(r^\frac32 h_e) =0$.
		
		Now, since $-ru_0$ is a solution, by \eqref{terza} and \eqref{quarta}, we deduce that $$W_\frac32(r^\frac32u_0)=\left(1+\frac{-\frac12}{n+2}\right)W_\frac32(ru_0)=\left(1+\frac{-\frac12}{n+2}\right)\left(-\frac12\right)\lVert u_0\rVert_{L^2(\partial B_1)}^2<0,$$ that is $c_0=0$, by \eqref{final}. 
	\end{proof}
	\bibliographystyle{alpha}
	\bibliography{paper1.bib}
\end{document}